\newtheorem{theorem}{Theorem}
\newtheorem{lemma}{Lemma}
\newtheorem{proposition}{Proposition}
\newtheorem{corollary}{Corollary}
\newtheorem{remark}{Remark}
\newtheorem{example}{Example}
\title[Discrete and smooth isoperimetric inequalities]{A unified approach to higher order discrete and smooth isoperimetric inequalities}
\author{Kwok-Kun Kwong}
\address{Mathematical Sciences Institute, Australian National University, Canberra, ACT 2601, Australia}
\email{kwok-kun.kwong@anu.edu.au}
\numberwithin{equation}{section}
\begin{document}

\maketitle

\begin{abstract}
We present a unified approach to derive sharp isoperimetric type inequalities of arbitrary high order. In particular, we obtain (i) sharp high order discrete polygonal isoperimetric type inequalities, (ii) sharp high order isoperimetric type inequalities for smooth curves with both upper and lower bounds for the isoperimetric deficit, and (iii) sharp higher order Chernoff type inequalities involving a generalized width function and higher order locus of curvature centers. Our approach involves obtaining higher order discrete or smooth Wirtinger inequalities via Fourier analysis, by examining a family of linear operators. The key to our approach is identifying the appropriate linear operator and translating the analytic inequalities into geometric ones.
\end{abstract}

\section{Introduction}
This paper presents a unified approach to obtaining sharp higher order isoperimetric-type inequalities and their stability, which can be divided into two parts: discrete and smooth. The approach involves identifying the appropriate function space and family of linear operators, analyzing these operators using the corresponding Fourier analysis to derive sharp higher order Wirtinger type inequalities, and translating the analytic information into geometric ones. These results are novel, even in the smooth case. Notably, we are not aware of any prior work that establishes geometric inequalities involving the ``derivatives'' of the ``curvature'' of a polygon.

In the first part of this paper, we derive a series of sharp higher order discrete Wirtinger inequalities together with their stability results, and use them to obtain sharp bounds for the isoperimetric inequalities for polygons. There is already a large number of results on the higher order Wirtinger type inequalities (see for example \cite{alzer1991converses, lunter1994new, milovanovic1997discrete}). However, up to now, the applications of the Wirtinger inequalities to isoperimetric type inequalities seem to be restricted only to the first order case, and these inequalities involve only the area, the length (or sum of the squares of the side lengths), and some special distances, but not the ``curvature'' or its higher ``derivatives'', see for example \cite{block1957discrete, tang1991discrete, zhang1997bonnesen}, or \cite{osserman1979bonnesen, Osserman1978, zhou2011some} for some analogous results in the smooth case. This is not surprising because for a polygon, the area and various lengths or distances involve only the zeroth and the first order differences. It is therefore desirable to see what kind of higher order discrete Wirtinger inequalities can be constructed to obtain information about the geometry of a polygon, especially those involving more delicate geometric quantities such as the curvature. To this end, we are going to define the curvature of a polygon to be the second order differences, and at the same time construct higher order Wirtinger inequalities which naturally contain the information about the curvature.

Indeed, we obtain a family of inequalities ($I_m$) indexed by $m\in \mathbb N$, which involves up to the $m$-th derivative.
The inequality $I_m$ in a sense measures the regularity of the $k$-gon up to the $m$-th order.
There are two remarkable properties of these inequalities: 1) when $m$ is odd, it provides a lower bound for the isoperimetric deficit, and when $m$ is even, it provides an upper bound; 2) the inequality $I_{m+1}$ measures the stability of $I_m$ by giving an upper bound on its deficit.

Our approach also has the advantage that all the constants that appear are explicit and sharp. See e.g. \cite{indrei2016sharp}, \cite{indrei2015stability} for another approach based on the spectral theory for circulant matrices, with a constant which is less explicit.

We present here the simplified versions of the general inequality $I_m$ stated in Theorem \ref{thm discrete higher} for $m=1$ and $m=2$. When $m=1$, the inequality $I_1$ (Theorem \ref{thm chakerian 2}) is the discrete version of Chakerian's sharpened isoperimetric inequality \cite{Chakerian1978}:
\begin{align*}
2 \cos^{2}\left(\frac{\pi}{k}\right)\left(S(P)-4 \tan \left(\frac{\pi}{k}\right) F(P)\right)\ge\left\|\boldsymbol{t}-2 i \sin \left(\frac{\pi}{k}\right) e^{i \frac{\pi}{k}} z\right\|^{2}.
\end{align*}
The equality holds if and only if $P$ is a positively oriented regular $k$-gon.
Here $S(P)$ is the sum of the squares of the side lengths of the $k$-sided polygon $P$ and $F(P)$ is the algebraic area enclosed by $P$. The vector $z$ denotes the position of the vertices and $\boldsymbol t$ denotes the ``tangent'' vectors at the vertices.

When $m=2$, the inequality $I_2$ in Theorem \ref{thm discrete higher} becomes
\begin{align*}
& 8\left(\sin^{2} \left(\frac{2 \pi}{k}\right)-\sin^{2} \left(\frac{\pi}{k}\right)\right) \cos^{2}\left(\frac{\pi}{k}\right)\left(S(P)-4 \tan \left(\frac{\pi}{k}\right) F(P)\right)\\
\le& 4\left(\sin^{2}\left(\frac{2 \pi}{k}\right)-\sin^{2}\left(\frac{\pi}{k}\right)\right) \left\|\boldsymbol{t}-2 i \sin \left(\frac{\pi}{k}\right) e^{i \frac{\pi}{k}} z\right\|^{2}+\left\| \boldsymbol{\kappa}+4 \sin^{2}\left(\frac{\pi}{k}\right) z \right\|^{2}.
\end{align*}
The terms $\left\|\boldsymbol{t}-2 i \sin \left(\frac{\pi}{k}\right) e^{i \frac{\pi}{k}} z\right\|^2$ and $\left\|\boldsymbol{\kappa}+4 \sin^2\left(\frac{\pi}{k}\right) z\right\|^2$ are natural as they vanish on a positively oriented regular $k$-gon, and in a sense they measure the regularity of the $k$-gon up to second order.
The inequality is sharp and we will characterize the rigidity case.
Here $\boldsymbol \kappa$ is the curvature vector, which is essentially a second order difference of the position vector.

Let us remark that this is the discrete analogue of the following stability inequality from \cite{kwong2021higher}:
$$
\frac{L}{2 \pi^{2}}\left(L^{2}-4 \pi F\right)
\le\int_{C}\left|z-\left(\frac{L}{2 \pi}\right) \boldsymbol n \right|^{2} d s+\frac{1}{3} \int_{C}\left|z+\left(\frac{L}{2 \pi}\right)^{2} \boldsymbol\kappa\right|^{2} d s.
$$
It can also be rearranged so that it gives a measure of the stability of the discrete Chakerian's isoperimetric inequality, see Remark \ref{rmk2}.

\begin{equation*}
\begin{split}
\end{split}
\end{equation*}
In the second part of our paper, starting from Section \ref{sec smooth}, we employ the smooth version of the higher-order Wirtinger inequalities to establish the smooth counterpart of the isoperimetric type inequalities. Our main result is Theorem \ref{smooth thm}, which in a sense quantifies the ``roundness'' of the curve up to order $m$.
\begin{theorem}[Theorem \ref{smooth thm}]
Let $C$ be a simple closed $C^{m}$ curve in $\mathbb C$ with the length $L$ and a unit speed counter-clockwise parametrization $z (s) $ with $s \in[0, L] $.
Assume that $C =\partial \Omega$ is the boundary curve of a domain $\Omega \subset \mathbb C$ with the area $F$. Let $\boldsymbol{n} $ denotes the outward-pointing unit normal on $C$. Assume $C$ has centroid $0$, then
\begin{equation*}
\begin{split}
0 \le & \sum_{l=1}^{m-1} s_{m, l}\left(\frac{L}{2 \pi}\right)^{2 l-3} \int_{C}\left|\left(\frac{d}{d s}\right)^{l-1}\left(z+\left(\frac{L}{2 \pi}\right)^{2} \boldsymbol{\kappa}\right)\right|^{2} d s \\
& -\frac{(-1)^{m}}{2}(m-1) !(m+1) !\left(\frac{1}{\pi}\left(L^{2}-4 \pi F\right)-\frac{2 \pi}{L} \int_{C}\left|z-\left(\frac{L}{2 \pi}\right) \boldsymbol{n}\right|^{2} d s\right).
\end{split}
\end{equation*}
The equality holds if and only if $z(t)=be^{it}$ for some $b\in \mathbb C$.
Here, the constants $s_{m, l}$ are explicit.
\end{theorem}
From a Fourier perspective, the integral $\sum_{l=1}^{m-1} s_{m, l}\left(\frac{L}{2 \pi}\right)^{2 l-1} \int_C\left|\left(\frac{d}{d s}\right)^{l-1}\left(z+\left(\frac{L}{2 \pi}\right)^2 \kappa\right)\right|^2 d s$ is a natural quantity to consider. It is worth noting that
a recent paper \cite{mccoy2023representation} introduced similar quantities, referred to as $m$-th energy functionals. This is not surprising, as their energy functionals are also based on a Fourier approach. However, unlike the energy functionals discussed here, the functionals introduced in \cite{mccoy2023representation} are limited to convex curves.

Finally, we will generalize the Chernoff area-width inequality. The classical Chernoff inequality states that for a closed convex curve $\gamma$ on $\mathbb R^{2}$ with area $F$ and width function $w(\theta)$,
$$
F \le \frac{1}{2} \int_{0}^{\frac{\pi}{2}} w(\theta) w\left(\theta+\frac{\pi}{2} \right) d \theta.
$$
Ou and Pan \cite{ou2010some} generalized the Chernoff inequality by considering the so called $k$-width of the curve, instead of the ordinary width:
$$
F \le \frac{1}{k} \int_{0}^{\frac{\pi}{k}} w_{k}(\theta) w_{k}\left(\theta+\frac{\pi}{k}\right) d \theta.
$$
This inequality reduces to the classical Chernoff inequality when $k=2$, and recovers the classical isoperimetric inequality when $k$ approaches infinity. As we have obtained higher order isoperimetric inequalities using Fourier analysis, it is natural to ask whether we can extend this method to obtain higher order versions of Ou-Pan type inequalities. In Section \ref{chernoff}, we address this question and introduce a family $(C_m)$ of $m$-th order generalized Chernoff inequalities. For further details, see Theorem \ref{Chernoff}.

To keep this introduction brief, we will leave the outline of the idea of the discrete or smooth Wirtinger inequalities to the next section, but let us just emphasize that the same approach is used to obtain the smooth isoperimetric type inequalities (Section \ref{sec smooth}) for plane curves, higher order Poincare type inequalities for hypersurfaces in Euclidean space of any dimension (which we are not going to cover in this paper, see \cite{kwong2021higher2}), and generalized Chernoff inequalities (Section \ref{chernoff}) that involve a generalized width, higher order locus of curvature centers and the mixed area.

The organization of this paper is as follows. In Section \ref{sec idea}, we outline the idea to obtain higher order Wirtinger inequalities on a smooth manifold, which can be easily adapted to the discrete case. In Section \ref{sec background}, we provide the basic knowledge of discrete Fourier analysis, which will then be used to prove discrete Wirtinger inequalities in Section \ref{sec discrete wirtinger}. These inequalities are in turn used to prove geometric discrete isoperimetric type inequalities in Section \ref{sec geom}. We then switch to the smooth case in Section \ref{sec smooth}, in which we prove the smooth analogues of the upper and lower bounds of the isoperimetric deficits. Section \ref{sec smooth} is independent of Sections \ref{sec background}, \ref{sec discrete wirtinger} and \ref{sec geom}, although the underlying idea is the same. Finally, in Section \ref{chernoff}, we prove a generalization of the Chernoff inequality.

\textbf{Acknowledgements}: We would like to thank Emanuel Indrei for suggesting this problem and for useful comments. The author is partially supported by the CERL fellowship at University of Wollongong and by grant FL150100126 of the Australian Research Council.

\section{ Basic idea to generate higher order inequalities }\label{sec idea}
The idea to generate higher order Wirtinger or Poincare type inequalities is as follows. For the sake of discussion, we will discuss the case of a Riemannian manifold, although the case for a polygon is completely analogous, and let us ignore any convergence or regularity issue in this section, and more details will be given later. (Of course, in the discrete case, there is no such issue.)
Let $M$ be a closed Riemannian manifold and $\{T_j\}_{j=1}^{m}: C^{\infty}(M)\to C^{\infty}(M)$ be a family of self-adjoint differential operators which mutually commute with each other. Then there exists an orthonormal Schauder basis $\{e_n\}$ which are simultaneous eigenfunctions of the $T_j$, with eigenvalues $\{\lambda_{n, j}\}$, i.e. $T_j [e_n] =\lambda_{n, j}e_n$.
\begin{example}
Let us give some examples of such operators. If $M=\mathbb S^{1}$ is the standard unit circle, examples of such operators include differential operators of the form $ \displaystyle T[h]=\sum_{j=0}^{m}c_j h^{(2j)}$. An important case is $T[h]=h+\ddot h$, which gives the reciprocal of the curvature of a convex curve $\gamma$ if $h=h(\theta)$ is the support function of a convex curve, parametrised by the normal angle $\theta$ (\cite[p. 34]{ChouZhu2001}).
Another class of such operators is the averaged translational operator such as
$\displaystyle T_k[h](\theta)=\frac{1}{k} \sum_{j=1}^{k} h\left(\theta+\frac{(2 j-1)\pi}{k}\right)$ for some $\displaystyle k\in \mathbb N$. This operator is the generalization of the Chernoff operator $T_2$.
In 1969, Chernoff \cite{chernoff1969area} obtained an area-width inequality for convex plane curves: Let $\gamma$ be a closed convex curve in the plane $\mathbb R^{2}$ with area $F$ and width function $w(\theta)$, then
$$
F \le \frac{1}{2} \int_{0}^{\pi / 2} w(\theta) w\left(\theta+\frac{1}{2} \pi\right) d \theta,
$$
and the equality holds if and only if $\gamma$ is a circle. Indeed, the Chernoff inequality can be regarded as a Wirtinger inequality for $T_2$. By obtaining a first order Wirtinger-type inequality for $T_k$, Ou and Pan \cite{ou2010some} obtained a generalized version of the Chernoff inequality:
$$
F \le \frac{1}{k} \int_{0}^{\pi / k} w_{k}(\theta) w_{k}\left(\theta+\frac{1}{k} \pi\right) d \theta,
$$
where $w_{k}(\theta)=h(\theta)+h\left(\theta+\frac{2 \pi}{k}\right)+\cdots+h\left(\theta+\left(\frac{2(k-1) \pi}{k}\right)\right)$ and $h$ is the support function. Note that Ou and Pan did not explicitly state the operator $T_k$ in their paper, but their analysis essentially focused on the spectral properties of $T_k$.

If $M=\mathbb S^{d-1}$, we can consider the family of operators $\{-\Delta -\lambda_j\mathrm{Id}\}_{j=1}^{m}$, where $\Delta$ is the (negative definite) Laplacian and
$0=\lambda_{0}<\lambda_{1}<\lambda_{2}<\cdots \rightarrow \infty$ are the eigenvalues of $-\Delta$ on $\mathbb S^{d-1}$. In \cite{kwong2021higher2}, the author considered this family of operators and obtained higher order Poincare inequalities on the sphere $\mathbb S^{d-1} $. As a result, new Minkowski-type inequalities involving higher order mean curvatures for convex hypersurfaces in $\mathbb R^d$ were also obtained.
\end{example}

Let us go back to the recipe to obtain higher order Wirtinger type inequalities. Suppose we can construct a family of such $\{T_j\}_{j=1}^{m}$ such that $\prod_{j=1}^{m} \lambda_{n, j}\ge0$ for all $n$. Suppose $f\in C^{\infty}(M)$ can be expressed in the orthonormal Schauder basis as $f=\sum_{n} a_{n} e_n$, then
\begin{align*}
\int_M f\cdot\prod_{j=1}^{m} T_j [f]
=\sum_{n}\left(\prod_{j=1}^{m} \lambda_{n, j} \right)|a_n|^2
\ge 0.
\end{align*}
This is our higher order Wirtinger inequality. Although the idea seems simple, when we suitably choose $f$ and $T_j$, and with some rearrangement, this gives us some geometric inequalities and stability results that are new.
\section{Background knowledge: discrete case}\label{sec background}
We will start from the discrete case first.
Let us first give the necessary definitions and notation. Let $k\in \mathbb N$, which we are going to fix.
A $k$-gon $P$ in the Euclidean plane is an ordered $k$-tuple of points, $P =\left(z_{0}, z_{1}, \ldots, z_{k-1}\right). $ These points are called vertices, and the line segments joining $z_{0}$ to $z_{1}, z_{1}$ to $z_{2}, \ldots, z_{k-1}$ to $z_{0}$ are called sides.
We identify a point on the plane with a complex number. For convenience, we also identify $z_{k-l}$ with $z_{-l}$ (and later on $\zeta_{k-l}$ with $\zeta_{-l}$), so for example $\{z_\nu: |\nu|\le m\}=\{z_\nu: 0\le \nu\le m \textrm{ or }k-m\le \nu\le k-1\}$.

We can then define scaling by complex numbers and linear combination of two polygons. So $\left(r e^{i \theta}\right) P$ is the image of $P$ under a dilatation by the real factor $r$ and a rotation through the angle $\theta$, both about the origin.
A linear combination of $k$-gons $P$ and $Q$ is the $k$-gon identified with the linear combination of the corresponding vectors in $\mathbb C^{k}$.

Let $\tau^{n} P$ be the $k$-gon obtained from $P$ by a cyclic shift of the entries of
$\left(z_{0}, z_{1}, \ldots, z_{k-1}\right) n$ places to the right. That is,
\begin{equation*}
\begin{split}
\tau P =\left(z_{k-1}, z_0, z_{1}, z_{2}, \ldots, z_{k-2} \right), \;
\tau^{2} P =\left(z_{k-2}, z_{k-1}, \ldots, z_{k-3}\right), \;\text { etc. }
\end{split}
\end{equation*}

We will denote by $R_{n}$ the special $n$-regular $k$-gon centered at 0 with $z_{0}=1$, namely,
$$
R_{n}=\left(1, \omega^{n}, \omega^{2 n}, \ldots, \omega^{(k-1) n}\right)
$$
where $\omega=e^{\frac{2 \pi i}{k} } $ is a $k$-th root of unity.

Since our results will be proved using finite Fourier series, it is more convenient to work with various Hermitian forms on the vector space $\mathbb C^{k}$ of all $k$-gons. In particular, we say a Hermitian form $I$ which is invariant under $\tau$ (i.e. $I(\tau P, \tau Q)=I(P, Q)$) a polygonal form (\cite[Section 3]{fisher1985perpendicular}). For simplicity we denote a Hermitian form and the quadratic form associated to it by the same symbol.

The signed area of $P$ is given by the polygonal form \cite[p. 30]{fisher1985perpendicular}
\begin{align}\label{F}
F(P):=\frac{1}{4 i} \sum_{j=0}^{k-1}\left(z_{j+1} \bar{z}_{j}-z_{j} \bar{z}_{j+1}\right).
\end{align}

Another polygonal form that we will consider here is $S=S(P)$, the sum of the squares of the side lengths of the polygon $P$.
Thus from the definition
\begin{equation}\label{S}
S(P)=\sum_{j=0}^{k-1}\left|z_{j+1}-z_{j}\right|^{2}=\sum_{j=0}^{k-1}\left(z_{j+1}-z_{j}\right)\left(\bar{z}_{j+1}-\bar{z}_{j}\right).
\end{equation}
It turns out that in the Fourier series approach, $S$ is a better quantity to control than the length $L=\sum_{\nu=0}^{k-1}|z_{\nu+1}-z_\nu|$ of the polygon. This is because $S$ is a polygonal form whereas the squared length $L^2$ is not even a Hermitian form. This is analogous to the fact that Fourier series is harder to be used to control the $L^p$ norm of a function or its derivatives for $p$ other than $2$. In many instances, the isoperimetric deficit $L^2-4\pi F$ in the smooth case is replaced by the deficit $S-4 \tan \left(\frac{\pi}{k}\right) F$ in the discrete case, see for example Theorem \ref{thm1}.

For the convenience of the reader, we now provide some basic knowledge on finite Fourier series here. For more details on finite Fourier series, please refer to \cite{schoenberg1950finite}.

Let
$z= \left(z_0, \cdots z_{k-1} \right)\in \mathbb C^k$.
The finite Fourier series (FFS) of $z$ is
$$
z_{\nu}=\zeta_{0}+\zeta_{1} \omega_{\nu}+\zeta_{2} \omega_\nu^{2}+\cdots+\zeta_{k-1} \omega_{\nu}^{k-1} \quad(\nu=0, 1, \cdots, k-1)
$$
where $\omega_\nu=e^{\frac{2 \pi i \nu }{k}}$. It is well-known that the Fourier coefficients $\zeta_\nu$ are given by
$\zeta_\nu=\frac{1}{k} \sum_{j=0}^{k-1}z_j \bar \omega_\nu^j =\frac{1}{k}\langle z, R_\nu\rangle$,
so that $z=\sum_{\nu=0}^{k-1}\zeta_\nu R_\nu$.

We define $\dot{z}$ by
\begin{equation*}
\dot{z}=
\left(z_1-z_0, z_{2}-z_{1}, \cdots, z_0-z_{k-1}\right).
\end{equation*}
Up to a sign, this is called the first difference, nevertheless we prefer to call it the ``first derivative'' in order to compare with \cite{kwong2021higher}.

Using this notation,
\eqref{S} and \eqref{F} become, respectively,
\begin{equation*}
S(P)=\|\dot P\|^2=\langle \dot P, \dot P\rangle
\quad \textrm{and }\quad
F(P) =-\frac{1}{2} \mathrm{Im}\langle P, \dot P\rangle,
\end{equation*}
where the $\langle\cdot, \cdot\rangle$ is the standard Hermitian inner product given by $\langle z, w\rangle=\sum_{\nu=0}^{k-1} z_{\nu} \overline{w_{\nu}}$.
This is the discrete analogue of the area enclosed by a smooth curve $C$:
$2 \mathrm{Area}
=\int_C xdy-ydx
=-\int_{C}\mathrm{Im}(z \dot{\bar{z}}) d t$.

In terms of FFS, we have
$\dot {z}
=\sum_{\nu=0}^{k-1} \zeta_{\nu} \dot R_{\nu}
=\sum_{\nu=0}^{k-1} \zeta_{\nu} (\omega_\nu-1)R_{\nu}
=\sum_{\nu=1}^{k-1} \zeta_{\nu} (\omega_\nu-1)R_{\nu}$.
So we have
$\left\|\dot{z}\right\|^{2}=k \sum_{\nu=1}^{k-1}\left|\zeta_{\nu}\right|^{2}\left|1-\omega_{\nu}\right|^{2}$.
Naturally we define the ``higher derivatives'', by defining $\ddot z$ to be the derivative of $\dot z$, etc. In the literature, these are called the differences of higher order. Denote the $j$-th derivative of $z$ by either $z^{(j)}$ or $D^jz$.
Inductively, we have the Parseval identity
\begin{align}\label{parseval}
\frac{1}{k}\left\|z^{(j)}\right\|^{2}
=\sum_{\nu=0}^{k-1}\left|1-\omega_{\nu}\right|^{2 j}\left|\zeta_{\nu}\right|^{2}
=\sum_{\nu=0}^{k-1}4^j\sin^{2j}\left(\frac{\nu\pi}{k}\right)\left|\zeta_{\nu}\right|^{2}.
\end{align}

We will also make frequent use of the fact that for a polygonal form $I$ (\cite[Theorem 3.6]{fisher1985perpendicular})
\begin{align*}
I(z)=\sum_{\nu=0}^{k-1}|\zeta_\nu|^2 I(R_\nu).
\end{align*}

\section{Discrete Wirtinger inequalities}\label{sec discrete wirtinger}

We now give a family of sharp discrete Wirtinger inequalities, which involves any arbitrary order of derivatives.
\begin{proposition}\label{prop1}
Let $m \le \left\lfloor\frac{k}{2}\right\rfloor$ be a natural number.
Define $m$-th degree polynomial $Q_m(x)=\prod_{j=1}^{m}\left(x-4 \sin^{2}\left(\frac{j \pi}{k}\right)\right)=\sum_{j=0}^{m} c_{m, j}x^j$.
Suppose $z=(z_0, \cdots, z_{k-1})$ satisfies $\sum_{\nu=0}^{k-1}z_\nu=0$, then
\begin{equation}\label{ineq c}
\begin{split}
\sum_{j=0}^{m} c_{m, j}\left\|z^{(j)}\right\|^{2} \ge 0.
\end{split}
\end{equation}
The equality holds if and only if the Fourier coefficients satisfy $\zeta_\nu=0$ for $m<\nu<k-m$.
\end{proposition}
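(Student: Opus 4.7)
The plan is to transport the whole inequality to the Fourier side using the Parseval identity \eqref{parseval} and then reduce it to a pointwise sign check. Since \eqref{parseval} expresses $\frac{1}{k}\|z^{(j)}\|^2$ as a linear combination of $|\zeta_\nu|^2$ weighted by $(4\sin^2(\nu\pi/k))^j$, taking the linear combination with coefficients $c_{m,j}$ and swapping the two sums gives
\begin{equation*}
\frac{1}{k}\sum_{j=0}^m c_{m,j}\|z^{(j)}\|^2 = \sum_{\nu=0}^{k-1} Q_m\!\left(4\sin^2\frac{\nu\pi}{k}\right)|\zeta_\nu|^2.
\end{equation*}
So everything comes down to checking that the weights $Q_m(4\sin^2(\nu\pi/k))$ are nonnegative on each index where $\zeta_\nu$ can be nonzero, which is the essence of the recipe from Section \ref{sec idea} applied to the commuting family of difference operators $z \mapsto z^{(j)}$.

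The centroid condition $\sum_\nu z_\nu = 0$ forces $\zeta_0 = 0$, which disposes of the $\nu=0$ term where $Q_m(0) = (-1)^m \prod_{j=1}^m 4\sin^2(j\pi/k)$ would otherwise carry the wrong sign for $m$ odd. For the remaining indices I would split into three ranges using monotonicity of $\sin^2$ on $[0,\pi/2]$ and its symmetry $\sin^2(\nu\pi/k) = \sin^2((k-\nu)\pi/k)$: for $1 \le \nu \le m$ the argument is exactly a root of $Q_m$; for $k-m \le \nu \le k-1$ the symmetry sends it to a root; and on the intermediate range $m < \nu < k-m$, the argument strictly exceeds every root, so each factor of $Q_m$ is strictly positive. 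The hypothesis $m \le \lfloor k/2 \rfloor$ ensures the first two ranges do not collide, so no root is duplicated in a way that would obscure this case analysis.

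Combining the three regimes, the Fourier-side sum collapses to $\sum_{m < \nu < k-m} Q_m(4\sin^2(\nu\pi/k))|\zeta_\nu|^2 \ge 0$, which proves \eqref{ineq c}, and the equality characterization is immediate because each surviving weight is strictly positive, so the sum vanishes precisely when $\zeta_\nu = 0$ for all $m < \nu < k-m$. I do not foresee a substantive obstacle; the argument is really a sign check once Parseval is paired with the defining product form of $Q_m$. The one delicate point is the bookkeeping on the upper half of the index range $k-m \le \nu \le k-1$, where the symmetry of $\sin^2$ is what produces the ``duplicate'' vanishings and is also what makes the centroid hypothesis the only extra assumption needed beyond $m \le \lfloor k/2 \rfloor$.
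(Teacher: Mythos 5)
Your proposal is correct and follows essentially the same route as the paper's proof: apply Parseval to reduce to the weighted sum $\sum_\nu Q_m\bigl(4\sin^2(\nu\pi/k)\bigr)|\zeta_\nu|^2$, kill the $\nu=0$ term via the centroid condition, observe that the indices $1\le\nu\le m$ and $k-m\le\nu\le k-1$ land exactly on roots of $Q_m$ by the symmetry of $\sin^2$, and check that every factor is strictly positive on the surviving range $m<\nu<k-m$. The equality characterization is handled identically, so there is nothing to add.
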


\begin{proof}
As $\sum_{\nu=0}^{k-1} z_{\nu}=0$, we have $\zeta_{0}=0$, where $z=\sum_{\nu=0}^{k-1} \zeta_{\nu} R_{\nu}$ is its finite Fourier series.
By \eqref{parseval},
\begin{equation*}
\begin{split}
\frac{1}{k} \sum_{j=0}^{m} c_{m, j}\left\|z^{(j)}\right\|^{2} & =\sum_{\nu=1}^{k-1}\left[\sum_{j=0}^{m} c_{m, j}\left|1-\omega_{\nu}\right|^{2 j}\right]\left|\zeta_{\nu}\right|^{2} \\
& =\sum_{\nu=1}^{k-1} Q_{m}\left(\left|1-\omega_{\nu}\right|^{2}\right)\left|\zeta_{\nu}\right|^{2} \\
& =\sum_{\nu=1}^{k-1}\left[\prod_{j=1}^{m}\left(\left|1-\omega_{\nu}\right|^{2}-4 \sin^{2}\left(\frac{j \pi}{k}\right)\right)\right]\left|\zeta_{\nu}\right|^{2} \\
& =\sum_{\nu=1}^{k-1} \left[\prod_{j=1}^{m}\left(\left|1-\omega_{\nu}\right|^{2}-\left|1-\omega_{j}\right|^{2}\right)\right]\left|\zeta_{\nu}\right|^{2} \\
& =\sum_{\nu=m+1}^{k-m-1}\left[\prod_{j=1}^{m}\left(\left|1-\omega_{\nu}\right|^{2}-\left|1-\omega_{j}\right|^{2}\right)\right]\left|\zeta_{\nu}\right|^{2} \\
& \ge 0.
\end{split}
\end{equation*}

Here, we have used the fact that
\begin{equation*}
\begin{split}
\left|1-\omega_{\nu}\right|^{2} =\left(\cos \left(\frac{2 \pi \nu}{k}\right)-1\right)^{2}+\sin^{2}\left(\frac{2 \pi \nu}{k}\right)
=2-2 \cos \left(\frac{2 \pi \nu}{k}\right)
=4 \sin^{2}\left(\frac{\nu \pi}{k}\right).
\end{split}
\end{equation*}

Hence for $m \le \left\lfloor\frac{k}{2} \right\rfloor$,
$$|1-\omega_1|^2=|1-\omega_{k-1}|^2< |1-\omega_2|^2=|1-\omega_{k-2}|^2<\cdots< |1-\omega_m|^2=|1-\omega_{k-m}|^2$$
and all $|1-\omega_\nu|^2> |1-\omega_m|^2$ for $m<\nu<k-m$.
\end{proof}

To obtain geometric applications of the inequality \eqref{ineq c}, we rearrange it so that each term has its geometric meaning. We are going to do it in two steps. The form that we are looking for is the inequality \eqref{ineq s} and we are going to give its geometric interpretation in Section \ref{sec geom}.
\begin{proposition}\label{prop lambda}
Let $m \le \left\lfloor \frac{k}{2}\right\rfloor$ be a natural number.
Define $P_1=1$ and the $(m-1)$-th degree polynomial $\displaystyle P_{m}(x):=\prod_{j=2}^{m}\left(x-4 \sin^{2}\left(\frac{j \pi}{k}\right)\right)=\sum_{k=0}^{m-1} \lambda_{m, k} x^{k}$ for $m\ge 2$.
Suppose $\displaystyle z=\left(z_{0}, \cdots, z_{k-1}\right)$ satisfies $\displaystyle \sum_{\nu=0}^{k-1} z_{\nu}=0$, then
$$\sum_{j=0}^{m-1} \lambda_{m, j}\left(\left\|z^{(j+1)}\right\|^{2}-4 \sin^{2}\left(\frac{\pi}{k}\right)\left\|z^{(j)}\right\|^{2} \right)\ge0. $$
\end{proposition}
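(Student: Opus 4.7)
The plan is to derive this as an immediate rearrangement of Proposition \ref{prop1}, exploiting the evident factorization
\[
Q_m(x) = \left(x - 4\sin^{2}\!\left(\frac{\pi}{k}\right)\right) P_m(x),
\]
which is clear from the definitions: the $j=1$ factor of $Q_m$ is exactly $x - 4\sin^2(\pi/k)$, while $P_m$ collects the factors for $j=2,\dots,m$. So the proof is essentially bookkeeping on polynomial coefficients.

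First, I would expand the factorization to relate the $c_{m,j}$ to the $\lambda_{m,j}$. Writing
\[
\sum_{j=0}^{m} c_{m,j} x^{j} = \left(x - 4\sin^{2}\!\left(\frac{\pi}{k}\right)\right)\sum_{j=0}^{m-1} \lambda_{m,j} x^{j},
\]
and matching coefficients gives $c_{m,0} = -4\sin^2(\pi/k)\,\lambda_{m,0}$, $c_{m,m} = \lambda_{m,m-1}$, and $c_{m,j} = \lambda_{m,j-1} - 4\sin^2(\pi/k)\,\lambda_{m,j}$ for $1 \le j \le m-1$.

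Second, I would substitute these identities into $\sum_{j=0}^{m} c_{m,j}\|z^{(j)}\|^{2}$ and reindex the ``$\lambda_{m,j-1}$ piece'' by shifting $j \mapsto j+1$, which yields the telescope-style identity
\[
\sum_{j=0}^{m} c_{m,j}\left\|z^{(j)}\right\|^{2} = \sum_{j=0}^{m-1} \lambda_{m,j}\left(\left\|z^{(j+1)}\right\|^{2} - 4\sin^{2}\!\left(\frac{\pi}{k}\right)\left\|z^{(j)}\right\|^{2}\right).
\]
Applying Proposition \ref{prop1} (which requires the hypothesis $\sum_\nu z_\nu = 0$, exactly the one assumed here) finishes the proof. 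The case $m=1$, where $P_1 = 1$ by convention, should be checked separately but reduces to the first-order Wirtinger inequality $\|\dot z\|^2 \ge 4\sin^2(\pi/k)\|z\|^2$, which is just Proposition \ref{prop1} with $m=1$.

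There is no real obstacle; the only thing to watch is the boundary indices $j=0$ and $j=m$ when matching coefficients, since the shifted sum contributes to indices $1,\dots,m$ whereas the unshifted sum contributes to indices $0,\dots,m-1$. The content of the proposition is simply that extracting the factor corresponding to the smallest eigenvalue $4\sin^2(\pi/k)$ from $Q_m$ recasts Proposition \ref{prop1} into a form where each summand $\|z^{(j+1)}\|^2 - 4\sin^2(\pi/k)\|z^{(j)}\|^2$ is manifestly a first-order Wirtinger-type quantity, setting up the geometric rearrangement promised for Section \ref{sec geom}.
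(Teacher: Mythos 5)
Your proposal is correct and follows essentially the same route as the paper: factor $Q_m(x)=\left(x-4\sin^{2}\left(\frac{\pi}{k}\right)\right)P_m(x)$, read off $c_{m,j}=\lambda_{m,j-1}-4\sin^{2}\left(\frac{\pi}{k}\right)\lambda_{m,j}$ (the paper handles your boundary cases $j=0$ and $j=m$ via the convention that out-of-range $\lambda_{m,j}$ vanish), and reindex the sum in Proposition \ref{prop1}. No issues.
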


\begin{proof}
From the relation $Q_{m}(x)=\left(x-4 \sin^{2}\left(\frac{\pi}{k}\right)\right) P_{m}(x)$, we can deduce that
$ c_{m, j}=\lambda_{m, j-1}-4 \sin^{2}\left(\frac{\pi}{k}\right) \lambda_{m, j}$, where we use the standard convention that $\lambda_{m, k}=0$ for $k \notin\{0, \cdots, m-1\}$.
From this and Proposition \ref{prop1} we have
\begin{equation*}
\begin{split}
0 \le \sum_{j=0}^{m} c_{m, j}\left\|z^{(j)}\right\|^{2}=& \sum_{j=0}^{m}\left(\lambda_{m, j-1}-4 \sin^{2}\left(\frac{\pi}{k}\right) \lambda_{m, j}\right)\left\|z^{(j)}\right\|^{2} \\
=& \sum_{j=0}^{m-1} \lambda_{m, j}\left(\left\|z^{(j+1)}\right\|^{2}-4 \sin^{2}\left(\frac{\pi}{k}\right)\left\|z^{(j)}\right\|^{2} \right).
\end{split}
\end{equation*}
\end{proof}

\begin{proposition}\label{prop s}
Let $m \le \left\lfloor\frac{k}{2}\right\rfloor$ be a natural number.
Define the $(m-2)$-th degree polynomial $\displaystyle \mathcal S_{m}(x)=\frac{P_{m}(x)-P_{m}\left(4 \sin^{2} \left(\frac{\pi}{k}\right)\right)}{x-4 \sin^{2}\left(\frac{\pi}{k}\right)}=\sum_{l=1}^{m-1} S_{m, l} x^{l-1} $ and $S_{m, 0}=P_{m}\left(4 \sin^{2}\left(\frac{\pi}{k}\right)\right)$.
Suppose $z=\left(z_{0}, \cdots, z_{k-1}\right)$ satisfies $\sum_{\nu=0}^{k-1} z_{\nu}=0$, then
\begin{equation}\label{ineq s}
\begin{split}
0 & \le S_{m, 0}\left(\|\dot{z}\|^{2}-4 \sin^{2}\left(\frac{\pi}{k}\right)\|z\|^{2}\right)+\sum_{l=1}^{m-1} S_{m, l}\left\|\tau z^{(l+1)}+4 \sin^{2}\left(\frac{\pi}{k}\right) z^{(l-1)}\right\|^{2}.
\end{split}
\end{equation}
The equality holds if and only if the Fourier coefficients of $z$ satisfy $\zeta_{\nu}=0$ for $m<\nu<k-m$.
\end{proposition}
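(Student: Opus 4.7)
The plan is to transform both sides of \eqref{ineq s} onto the Fourier side and reduce the inequality directly to Proposition \ref{prop1} (equivalently Proposition \ref{prop lambda}). Write $z=\sum_\nu \zeta_\nu R_\nu$ with $\zeta_0=0$, and set $a=4\sin^2(\pi/k)$ and $x_\nu=|1-\omega_\nu|^2=4\sin^2(\pi\nu/k)$. The first bracket is handled immediately by \eqref{parseval}: $\|\dot z\|^2-a\|z\|^2=k\sum_\nu (x_\nu-a)|\zeta_\nu|^2$.

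The main computation is to find the Fourier expansion of the mixed expression $\tau z^{(l+1)}+a\,z^{(l-1)}$. Since $(\tau R_\nu)_j=(R_\nu)_{j-1}=\omega^{-\nu}(R_\nu)_j$, we have $\tau R_\nu=\omega^{-\nu} R_\nu$, and combined with the fact (used in the proof of Proposition \ref{prop1}) that the $\nu$-th Fourier coefficient of $z^{(j)}$ is $(\omega_\nu-1)^j\zeta_\nu$, the $\nu$-th Fourier coefficient of $\tau z^{(l+1)}+a\,z^{(l-1)}$ equals
$$(\omega_\nu-1)^{l-1}\bigl[\omega^{-\nu}(\omega_\nu-1)^2+a\bigr]\zeta_\nu.$$
A short expansion gives $\omega^{-\nu}(\omega_\nu-1)^2=\omega_\nu+\omega^{-\nu}-2=-4\sin^2(\pi\nu/k)=-x_\nu$, so this coefficient simplifies to $(\omega_\nu-1)^{l-1}(a-x_\nu)\zeta_\nu$. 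Parseval then yields
$$\|\tau z^{(l+1)}+a\,z^{(l-1)}\|^2=k\sum_\nu x_\nu^{l-1}(x_\nu-a)^2|\zeta_\nu|^2.$$

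Substituting into the right-hand side of \eqref{ineq s} produces $k\sum_\nu (x_\nu-a)\bigl[S_{m,0}+(x_\nu-a)\mathcal{S}_m(x_\nu)\bigr]|\zeta_\nu|^2$, and the bracket telescopes by the very definition of $\mathcal{S}_m$ and $S_{m,0}$: $S_{m,0}+(x-a)\mathcal{S}_m(x)=P_m(a)+\bigl(P_m(x)-P_m(a)\bigr)=P_m(x)$. Hence the right-hand side equals $k\sum_\nu (x_\nu-a)P_m(x_\nu)|\zeta_\nu|^2=k\sum_\nu Q_m(x_\nu)|\zeta_\nu|^2$, which is nonnegative by the calculation in the proof of Proposition \ref{prop1}, with equality precisely when $\zeta_\nu=0$ for $m<\nu<k-m$. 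The only conceptual obstacle is recognizing why the combination $\tau z^{(l+1)}+az^{(l-1)}$ is the ``correct'' quantity in the first place: the identity $\omega^{-\nu}(\omega_\nu-1)^2=-x_\nu$ is precisely what forces this form and is the algebraic motivation behind the definition of $\mathcal{S}_m$.
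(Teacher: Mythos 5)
Your proof is correct, and it takes a genuinely more direct route than the paper's. The paper proceeds in two stages: it first proves Proposition \ref{prop lambda} by the coefficient relation $c_{m,j}=\lambda_{m,j-1}-4\sin^2\left(\frac{\pi}{k}\right)\lambda_{m,j}$, and then proves Proposition \ref{prop s} by a second coefficient relation \eqref{lambda} together with the norm-level identity $I_l=J_l-4\sin^2\left(\frac{\pi}{k}\right)J_{l-1}$, which it establishes in physical space via the summation-by-parts formula $\langle\dot w,\dot w\rangle=-\langle\tau\ddot w,w\rangle$, followed by an Abel-summation rearrangement. You instead push everything to the Fourier side at once: your identity $\omega_\nu^{-1}(\omega_\nu-1)^2=-x_\nu$ shows that the operator $z\mapsto \tau\ddot z+4\sin^2\left(\frac{\pi}{k}\right)z$ acts as the multiplier $-(x_\nu-4\sin^2\left(\frac{\pi}{k}\right))$, so the whole right-hand side diagonalizes as $k\sum_\nu(x_\nu-a)\left[S_{m,0}+(x_\nu-a)\mathcal S_m(x_\nu)\right]|\zeta_\nu|^2=k\sum_\nu Q_m(x_\nu)|\zeta_\nu|^2$, and the polynomial division happens pointwise rather than at the level of coefficients. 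Your version is self-contained, makes the equality case immediate, and explains structurally why $\tau z^{(l+1)}+4\sin^2\left(\frac{\pi}{k}\right)z^{(l-1)}$ is the right combination; the paper's version buys modularity, since the recursion $I_l=J_l-aJ_{l-1}$ and the coefficient identities are exactly what feed into the stability statements of Proposition \ref{prop stab}.
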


\begin{proof}
By definition,
\begin{equation*}
\begin{split}
P_{m}(x) & =\left(x-4 \sin^{2} \frac{\pi}{k}\right) \mathcal S_{m}(x)+P_{m}\left(4\sin^{2} \left(\frac{\pi}{k}\right)\right) \\
& =\left(x-4 \sin^{2}\left(\frac{\pi}{k}\right)\right) \mathcal S_{m}(x)+S_{m, 0} \\
& =\sum_{l=0}^{m-1}\left(S_{m, l}- 4 \sin^{2}\left(\frac{\pi}{k}\right) S_{m, l+1}\right) x^{l}.
\end{split}
\end{equation*}
Therefore
\begin{equation}\label{lambda}
\lambda_{m, l}=S_{m, l}-4 \sin^{2}\left(\frac{\pi}{k}\right) S_{m, l+1}.
\end{equation}

Define $I_{l}=\left\|\tau z^{(l+1)}+4 \sin^{2}\left(\frac{\pi}{k}\right)z^{(l-1)}\right\|^{2}$ and $J_{l}=\left\|z^{(l+1)}\right\|^{2}-4\sin^2\left(\frac{\pi}{k}\right)\left\|z^{(l)}\right\|^{2}$. We have
\begin{equation}\label{Il}
\begin{split}
I_{l} & =\left\|\tau z^{(l+1)}+4 \sin^{2}\left(\frac{\pi}{k}\right) z^{(l-1)}\right\|^{2} \\
& =\left\|z^{(l+1)}\right\|^{2}+16 \sin^{4}\left(\frac{\pi}{k}\right)\left\|z^{(l-1)}\right\|^{2}+8 \sin^{2}\left(\frac{\pi}{k}\right)\left\langle\tau z^{(l+1)}, z^{(l-1)}\right\rangle \\
& =\left\|z^{(l+1)}\right\|^{2}+16 \sin^{4}\left(\frac{\pi}{k}\right)\left\|z^{(l-1)}\right\|^{2}-8 \sin^{2}\left(\frac{\pi}{k}\right)\left\|z^{(l)}\right\|^{2} \\
& =\left(\left\|z^{(l+1)}\right\|^{2}-4 \sin^{2}\left(\frac{\pi}{k}\right)\left\|z^{(l)}\right\|^{2}\right)-4 \sin^{2}\left(\frac{\pi}{k}\right)\left(\left\|z^{(l)}\right\|^{2}-4\sin^{2}\left(\frac{\pi}{k}\right)\left\|z^{(l-1)}\right\|^{2}\right) \\
& =J_{l}-4 \sin^{2}\left(\frac{\pi}{k}\right) J_{l-1}.
\end{split}
\end{equation}
Here we have used the summation by parts formula $\langle\dot{w}, \dot{w}\rangle=-\left\langle\ddot{w}, \tau^{-1} w\right\rangle=-\langle\tau \ddot{w}, w\rangle$.
So by Proposition \ref{lambda}, \eqref{lambda} and \eqref{Il}, we have
\begin{equation*}
\begin{split}
0 \le& \sum_{l=0}^{m-1} \lambda_{m, l} J_{l}=\sum_{l=0}^{m-1}\left(S_{m, l}-4 \sin^{2}\left(\frac{\pi}{k}\right) S_{m, l+1}\right) J_{l}
=S_{m, 0} J_{0}+\sum_{l=1}^{m-1} S_{m, l}\left(J_{l}-4 \sin^{2}\left(\frac{\pi}{k}\right) J_{l-1}\right)\\
=& S_{m, 0} \left(\left\|\dot z\right\|^{2}-4 \sin^{2}\left(\frac{\pi}{k}\right)\left\|z\right\|^{2}\right)+\sum_{l=1}^{m-1} S_{m, l}\left\|\tau z^{(l+1)}+4 \sin^{2}\left(\frac{\pi}{k}\right) z^{(l-1)}\right\|^{2}.
\end{split}
\end{equation*}
\end{proof}
Using the recurrence relations satisfied by the polynomials $Q_m$ and $S_{m}$, we can also derive stability results for the inequalities in Proposition \ref{prop1} and Proposition \ref{prop s}.
\begin{proposition}\label{prop stab}
Let $m \le \left\lfloor\frac{k}{2}\right\rfloor-1$ be a natural number and
$z=\left(z_0, \cdots, z_{k-1}\right)$ with $\sum_{\nu=0}^{k-1} z_\nu=0$.
With the notation of Proposition \ref{prop1} and \ref{prop s}, the following inequalities hold.
\begin{enumerate}
\item
\begin{equation}\label{stab1}
\sum_{l=0}^{m} C_{m, l}\left\|z^{(l)}\right\|^{2}
\le \frac{1}{4 \sin^{2}\left(\frac{m+1) \pi}{k}\right)}\sum_{l=0}^{m} C_{m, l}\left\|z^{(l+1)}\right\|^{2}.
\end{equation}
\item
\begin{equation}\label{stab2}
\begin{split}
& S_{m, 0}\left(\|\dot{z}\|^{2}-4 \sin^{2}\left(\frac{\pi}{k}\right)\|z\|^{2}\right)+\sum_{l=1}^{m-1} S_{m, l}\left\|\tau z^{(l+1)}+4 \sin^{2}\left(\frac{\pi}{k}\right) z^{(l-1)}\right\|^{2}\\
& \le \frac{1}{4 \sin^{2}\left(\frac{(m+1) \pi}{k}\right)}\left[4\sin^{2}\left(\frac{\pi}{k}\right)S_{m, 0} \left(\|\dot{z}\|^{2}-4 \sin^{2}\left(\frac{\pi}{k}\right)\|z\|^{2}\right)+\sum_{l=0}^{m-1} S_{m, l} \|\tau z^{(l+2)}+4 \sin^{2}\left(\frac{\pi}{k}\right)z^{(l)}\|^{2} \right].
\end{split}
\end{equation}
\end{enumerate}
The equality for both inequalities hold if and only if $z=\sum_{0<|\nu|\le m+1} \zeta_\nu R_\nu$.
\end{proposition}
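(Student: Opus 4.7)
The plan is to prove both \eqref{stab1} and \eqref{stab2} through the same Fourier reformulation used in Propositions \ref{prop1} and \ref{prop s}. Set $x_\nu := |1-\omega_\nu|^2 = 4\sin^2(\nu\pi/k)$ for brevity. Two elementary facts about $Q_m$ drive everything: $Q_m(x_\nu) = 0$ for $\nu \in \{1,\ldots,m\}\cup\{k-m,\ldots,k-1\}$, and $x_\nu \ge x_{m+1}$ for $m+1 \le \nu \le k-m-1$ with equality only at $\nu = m+1$ and $\nu = k-m-1$; both facts use the hypothesis $m \le \lfloor k/2\rfloor -1$ and the monotonicity of $\sin^2$ on $[0,\pi/2]$.

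For \eqref{stab1}: Parseval's identity \eqref{parseval} together with $\zeta_0 = 0$ gives
\begin{equation*}
\sum_{l=0}^{m} C_{m,l}\|z^{(l)}\|^{2} = k\sum_{\nu=1}^{k-1} Q_m(x_\nu)|\zeta_\nu|^{2}, \qquad \sum_{l=0}^{m} C_{m,l}\|z^{(l+1)}\|^{2} = k\sum_{\nu=1}^{k-1} x_\nu Q_m(x_\nu)|\zeta_\nu|^{2}.
\end{equation*}
After the vanishing factors $Q_m(x_\nu)$ eliminate every index with $|\nu|\le m$, the inequality reduces to $\sum_{\nu}(x_\nu - x_{m+1})\,Q_m(x_\nu)\,|\zeta_\nu|^2 \ge 0$, which holds termwise.

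For \eqref{stab2}: the rearrangement already used inside the proof of Proposition \ref{prop s} identifies the left-hand side with $\sum_{l=0}^{m-1}\lambda_{m,l} J_l$, which in Fourier is again $k\sum_\nu Q_m(x_\nu)|\zeta_\nu|^2$ via $Q_m(x)=(x-4\sin^2(\pi/k))P_m(x)$. The key calculation is to Fourier-transform the bracket on the right. Using the shifted form $I_{l+1}=J_{l+1}-4\sin^2(\pi/k)J_l$ of \eqref{Il}, the bookkeeping $\sum_{l=0}^{m-1}S_{m,l}x^l = S_{m,0} + x\mathcal S_m(x)$, and the defining identity $P_m(x)=(x-4\sin^2(\pi/k))\mathcal S_m(x)+S_{m,0}$, the bracket collapses to $k\sum_{\nu}x_\nu Q_m(x_\nu)|\zeta_\nu|^2$. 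Thus \eqref{stab2} becomes exactly the same Fourier inequality as \eqref{stab1} and is closed by the same termwise comparison. Equality in either inequality forces $(x_\nu - x_{m+1})Q_m(x_\nu)|\zeta_\nu|^2 = 0$ for every $\nu$; the only surviving constraint is $\zeta_\nu = 0$ for $m+2 \le \nu \le k-m-2$, which (under the identification $\zeta_{k-l}=\zeta_{-l}$) is exactly $z = \sum_{0<|\nu|\le m+1}\zeta_\nu R_\nu$.

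The main obstacle I foresee is the algebraic collapse of the bracket in \eqref{stab2}. After substituting $I_{l+1}$ and reindexing, a common factor $x_\nu - 4\sin^2(\pi/k)$ must be pulled out to convert the weight to $(S_{m,0}+(x_\nu-4\sin^2(\pi/k))\mathcal S_m(x_\nu))\cdot x_\nu(x_\nu-4\sin^2(\pi/k))$; recognising this expression as $x_\nu P_m(x_\nu)(x_\nu-4\sin^2(\pi/k)) = x_\nu Q_m(x_\nu)$ is the one nontrivial step, but it is forced by the definitions of $P_m$ and $\mathcal S_m$. Once this identity is in hand, the rest is just the monotonicity of $\sin^2(\nu\pi/k)$ on $0\le\nu\le\lfloor k/2\rfloor$.
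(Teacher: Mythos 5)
Your proof is correct and is essentially the paper's argument unrolled: the paper deduces both inequalities by applying Propositions \ref{prop1} and \ref{prop s} at order $m+1$ together with the coefficient recurrences coming from $Q_{m+1}(x)=\bigl(x-4\sin^{2}\bigl(\frac{(m+1)\pi}{k}\bigr)\bigr)Q_{m}(x)$ and $\mathcal S_{m+1}(x)=\bigl(x-4\sin^{2}\bigl(\frac{(m+1)\pi}{k}\bigr)\bigr)\mathcal S_{m}(x)+S_{m,0}$, which is exactly the termwise Fourier inequality $\sum_{\nu}(x_{\nu}-x_{m+1})Q_{m}(x_{\nu})|\zeta_{\nu}|^{2}\ge 0$ that you verify directly. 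Your identification of the bracket in \eqref{stab2} with $k\sum_{\nu}x_{\nu}Q_{m}(x_{\nu})|\zeta_{\nu}|^{2}$ and your equality analysis both check out.
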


\begin{proof}
By the identity $Q_{m+1}(x)=\left(x-4 \sin^{2}\left(\frac{(m+1) \pi}{k}\right)\right) Q_{m}(x)$, we have the recurrence relation
\begin{equation*}
C_{m+1, l}=C_{m, l-1}-4 \sin^{2}\left(\frac{m+1) \pi}{k}\right) C_{m, l}.
\end{equation*}
Here, we use the standard convention that $c_{m, k}=0$ for $k \notin\{0, \cdots, m\} $. By replacing $m$ with $m+1$, the inequality \eqref{ineq c} then gives
\begin{equation*}
\begin{split}
0 & \le \sum_{l=0}^{m+1} C_{m+1, l}\left\|z^{(l)}\right\|^{2} \\
& =\sum_{l=0}^{m+1}\left[C_{m, l-1}-4 \sin^{2}\left(\frac{m+1) \pi}{k}\right) C_{m, l}\right]\left\|z^{(l)}\right\|^{2} \\
& =\sum_{l=0}^{m} C_{m, l}\left\|z^{(l+1)}\right\|^{2}-4 \sin^{2}\left(\frac{(m+1) \pi}{k}\right) \sum_{l=0}^{m} C_{m, l}\left\|z^{(l)}\right\|^{2}.
\end{split}
\end{equation*}
From this \eqref{stab1} follows.

The proof for \eqref{stab2} is similar. It requires the recurrence relation $S_{m+1, l}=S_{m, l-1}-4 \sin^{2}\left(\frac{m+1) \pi}{k}\right) S_{m, l}$, for $l \ge 1$, which is obtained from the identity $\mathcal S_{m+1}(x)=\left(x-4 \sin^{2}\left(\frac{m+1) \pi}{k}\right)\right) \mathcal S_{m}(x)+S_{m, 0}$.
\end{proof}

\section{Geometric inequalities}\label{sec geom}
Recall that $S=S(P)$ and $F=F(P)$ are the sum of squared sides and the signed area of the polygon $P$ respectively.
If $P$ is a $k$-gon represented by $z\in \mathbb C^k$, then
$$F(P)=-\frac{1}{2} \mathrm{Im}\left\langle z, \dot{z}\right\rangle\quad \textrm{and}\quad S(P)=\|\dot{z}\|^2. $$

For a polygon $z$, we define $\boldsymbol{t}=\dot{z}$ to be its set of tangent vectors (at the corresponding vertices $z_\nu$) and
$ \boldsymbol{\kappa} =\tau\ddot z$ to be the discrete curvature vectors (again at the vertices $z_\nu$). Note that the translation $\tau$ is necessary because
$\tau\ddot z$ at the $\nu$-th position corresponds to the change of the tangent vectors across the vertex $z_\nu$, i.e. it is the change from $\overrightarrow{z_{\nu-1}z_\nu}$ to $\overrightarrow {z_\nu z_{\nu+1}}$. Also, it is not true that $\boldsymbol{\kappa}$ is perpendicular to $\boldsymbol{t}$ in general, unlike the smooth case.

For a regular polygon with centroid $0$ which is positively oriented, $\boldsymbol{t}=\dot z= 2i e^{i \frac{\pi}{k}} \sin \left(\frac{\pi}{k}\right) z $. Therefore the quantity $\left\|\dot{z}-2 i \sin \left(\frac{\pi}{k}\right) e^{i \frac{\pi}{k}} z\right\|^{2}$ measures how far a given polygon $P$ deviates from a regular $k$-gon.

\begin{proposition}\label{prop4}
Let $P$ be a $k$-gon represented by $z$. Then
\begin{equation*}
\begin{aligned}
\|\dot z\|^{2}-4 \sin^{2}\left(\frac{\pi}{k}\right)\|z\|^{2}
=2 \cos^{2}\left(\frac{\pi}{k}\right)\left(S(P)-4\tan \left(\frac{\pi}{k}\right) F(P)\right)-\left\|\dot{z}-2 i \sin \left(\frac{\pi}{k}\right) e^{i \frac{\pi}{k}} z\right\|^{2}.
\end{aligned}
\end{equation*}
\end{proposition}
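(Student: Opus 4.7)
The plan is to move the squared-norm term on the right-hand side to the left, expand it via the Hermitian inner product, and reduce the claim to two elementary identities for $\langle \dot z, z\rangle$. Setting $\alpha := 2i\sin(\pi/k) e^{i\pi/k}$ for brevity, I would first expand
\begin{equation*}
\|\dot z - \alpha z\|^2 = \|\dot z\|^2 + |\alpha|^2\|z\|^2 - 2\mathrm{Re}\bigl(\bar\alpha\,\langle \dot z, z\rangle\bigr).
\end{equation*}
Since $|\alpha|^2 = 4\sin^2(\pi/k)$, the term $|\alpha|^2\|z\|^2$ exactly cancels $-4\sin^2(\pi/k)\|z\|^2$ on the left-hand side. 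Using $\|\dot z\|^2 = S(P)$, the claim reduces to the single scalar identity
\begin{equation*}
\mathrm{Re}\bigl(\bar\alpha\,\langle \dot z, z\rangle\bigr) = \sin^2(\pi/k)\, S(P) + 2\sin(2\pi/k)\, F(P).
\end{equation*}

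To evaluate the left-hand side of this identity, I would expand $\bar\alpha = -2i\sin(\pi/k)e^{-i\pi/k}$ into real and imaginary parts via the double-angle formula, obtaining $\bar\alpha = -2\sin^2(\pi/k) - i\sin(2\pi/k)$. Hence
\begin{equation*}
\mathrm{Re}\bigl(\bar\alpha\,\langle \dot z, z\rangle\bigr) = -2\sin^2(\pi/k)\,\mathrm{Re}\langle \dot z, z\rangle + \sin(2\pi/k)\,\mathrm{Im}\langle \dot z, z\rangle,
\end{equation*}
so the problem is reduced to computing $\mathrm{Re}\langle \dot z, z\rangle$ and $\mathrm{Im}\langle \dot z, z\rangle$ geometrically.

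The final step is to match these two real numbers with $S(P)$ and $F(P)$. For the real part, expanding $S(P) = \sum_\nu |z_{\nu+1}-z_\nu|^2$ from \eqref{S} and using the $k$-periodicity $\sum|z_{\nu+1}|^2 = \sum|z_\nu|^2$ yields $\langle \dot z, z\rangle + \overline{\langle \dot z, z\rangle} = -S(P)$, so $\mathrm{Re}\langle \dot z, z\rangle = -S(P)/2$. For the imaginary part, the definition \eqref{F} directly gives $\mathrm{Im}\langle z, \dot z\rangle = -2F(P)$ and therefore $\mathrm{Im}\langle \dot z, z\rangle = 2F(P)$. Substituting both into the displayed formula produces the required identity on the nose, and a last trigonometric simplification ($4\sin(2\pi/k) = 8\cos^2(\pi/k)\tan(\pi/k)$) puts the answer into the stated form. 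I do not expect a genuine mathematical obstacle; the only pitfall is bookkeeping, especially staying consistent with the convention $\langle z, w\rangle = \sum z_\nu \overline{w_\nu}$ (conjugate-linear in the second slot) used throughout the paper.
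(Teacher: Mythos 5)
Your proof is correct, and it takes a genuinely more elementary route than the paper's. Both arguments begin with the same expansion of $\left\|\dot z-\alpha z\right\|^{2}$ with $\alpha=2i\sin\left(\frac{\pi}{k}\right)e^{i\frac{\pi}{k}}$, so that everything hinges on the single cross term $\mathrm{Re}\bigl(\bar\alpha\langle\dot z,z\rangle\bigr)$; the difference is in how that term is evaluated. You compute it directly from the definitions, using the two summation identities $\mathrm{Re}\langle\dot z,z\rangle=-\frac{1}{2}S(P)$ (from periodicity of $\sum_\nu|z_\nu|^{2}$) and $\mathrm{Im}\langle\dot z,z\rangle=2F(P)$, plus the splitting $\bar\alpha=-2\sin^{2}\left(\frac{\pi}{k}\right)-i\sin\left(\frac{2\pi}{k}\right)$ — all of which check out, and the final trigonometric simplification $8\cos^{2}\left(\frac{\pi}{k}\right)\tan\left(\frac{\pi}{k}\right)=4\sin\left(\frac{2\pi}{k}\right)$ closes the identity. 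The paper instead expands $z=\sum_\nu\zeta_\nu R_\nu$ in the finite Fourier basis, evaluates $-\frac{1}{2}\mathrm{Im}\langle e^{i\frac{\pi}{k}}R_\nu,\dot R_\nu\rangle=k\sin\left(\frac{\nu\pi}{k}\right)\cos\left(\frac{(\nu-1)\pi}{k}\right)$ mode by mode, and applies the cosine addition formula to re-express the sum in terms of $F(R_\nu)$ and $S(R_\nu)$. Your approach is shorter and needs no Fourier machinery; the paper's buys the per-mode formula $k\sin\left(\frac{\nu\pi}{k}\right)\cos\left(\frac{(\nu-1)\pi}{k}\right)$, which is then reused verbatim in the proof of Theorem \ref{thm chakerian 2} to run the refined mode-by-mode comparison that yields the stronger rigidity statement there. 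As a standalone proof of Proposition \ref{prop4}, yours is complete; just be aware it does not produce the intermediate quantities the paper leans on later.
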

\begin{proof}
By writing $z=\sum_{\nu=0}^{k-1} \zeta_{\nu} R_{\nu}$, we have
\begin{equation}\label{pf1}
\begin{split}
& \left\|\dot{z}-2 i \sin \left(\frac{\pi}{k}\right) e^{i \frac{\pi}{k}} z\right\|^{2}+\|\dot{z}\|^{2}-4 \sin^{2}\left(\frac{\pi}{k}\right)\|z\|^{2} \\
=& 2\|\dot{z}\|^{2}-8 \sin \left(\frac{\pi}{k}\right)\left[-\frac{1}{2} \mathrm{Im}\left\langle e^{i \frac{\pi}{k}} z, \dot{z}\right\rangle\right]\\
=& 2\|\dot{z}\|^{2}-8 \sin \left(\frac{\pi}{k}\right)\left[-\frac{1}{2} \sum_{\nu=0}^{k-1}\left|\zeta_{\nu}\right|^{2} \mathrm{Im}\left\langle e^{i \frac{\pi}{k}} R_{\nu}, \dot R_{\nu}\right\rangle\right].
\end{split}
\end{equation}
By direct computation, $-\frac{1}{2} \mathrm{Im}\left\langle e^{i \frac{\pi}{k}} R_{\nu}, \dot{R}_{\nu}\right\rangle=k \sin \left(\frac{\nu \pi}{k}\right) \cos \left(\frac{(\nu-1) \pi}{k}\right)$ and $-\frac{1}{2} \mathrm{Im}\left\langle R_{\nu}, \dot{R}_{\nu}\right\rangle=$ $F\left(R_{\nu}\right)=k \sin \left(\frac{\nu \pi}{k}\right) \cos \left(\frac{\nu \pi}{k}\right)$. Therefore
\begin{equation*}
\begin{split}
& \left\|\dot{z}-2 i \sin \left(\frac{\pi}{k}\right) e^{i \frac{\pi}{k}} z\right\|^{2}+\|\dot{z}\|^{2}-4 \sin^{2}\left(\frac{\pi}{k}\right)\|z\|^{2}\\
=& 2\|\dot{z}\|^{2}-8 \sin \left(\frac{\pi}{k}\right) \sum_{\nu=0}^{k-1}\left|\zeta_{\nu}\right|^{2} k \sin \left(\frac{\nu \pi}{k}\right) \cos \left(\frac{(\nu-1) \pi}{k}\right) \\
=& 2\|\dot{z}\|^{2}-8 \sin \left(\frac{\pi}{k}\right) \sum_{\nu=0}^{k-1}\left|\zeta_{\nu}\right|^{2} k \sin \left(\frac{\nu \pi}{k}\right)\left(\cos \left(\frac{\nu \pi}{k}\right) \cos \left(\frac{\pi}{k}\right)+\sin \left(\frac{\nu \pi}{k}\right) \sin \left(\frac{\pi}{k}\right)\right) \\
=& 2\|\dot{z}\|^{2}-8 \sin \left(\frac{\pi}{k}\right) \cos \left(\frac{\pi}{k}\right) \sum_{\nu=0}^{k-1}\left|\zeta_{\nu}\right|^{2} F\left(R_{\nu}\right)-2 \sin^{2}\left(\frac{\pi}{k}\right) \sum_{\nu=0}^{k-1}\left|\zeta_{\nu}\right|^{2} S\left(R_{\nu}\right) \\
=& 2 S(P)-8 \sin \left(\frac{\pi}{k}\right) \cos \left(\frac{\pi}{k}\right) F(P)-2 \sin^{2}\left(\frac{\pi}{k}\right) S(P) \\
=& 2 \cos^{2}\left(\frac{\pi}{k}\right)\left(S(P)-4 \tan \left(\frac{\pi}{k}\right) F(P)\right),
\end{split}
\end{equation*}
where we have used $S\left(R_{\nu}\right)=4 k \sin^{2}\left(\frac{\nu \pi}{k}\right)$.
\end{proof}

In \cite{Chakerian1978}, Chakerian proved the following sharpened isoperimetric inequality for a simple closed curve with centroid $0$:
$$
\frac{2 \pi^{2}}{L} \int_{ C }\left| z -\left(\frac{L}{2 \pi}\right) \boldsymbol{n} \right|^{2} d s \le L^{2}-4 \pi F.
$$
A discrete analogue of Chakerian's result is given by the following result. Indeed, it is the $m=1$ case of Theorem \ref{thm discrete higher}. We single out this case here in order to compare with Theorem \ref{thm chakerian 2}.

\begin{theorem}[Discrete Chakerian's isoperimetric inequality]\label{thm1}
Let $k\ge 3$.
For any $k$-gon $P$ with centroid $0$,
\begin{equation*}
S (P)- 4 \tan \left(\frac{\pi}{k}\right) F(P) \ge 2\tan^2\left(\frac{\pi}{k}\right) \left\|z+\frac{i e^{-i \frac{\pi}{k}} \boldsymbol{t}}{2 \sin \left(\frac{\pi}{k}\right)}\right\|^{2}.
\end{equation*}
The equality holds if and only if $P$ has the form $\zeta_1R_1+\zeta_{k-1}R_{k-1}$.
\end{theorem}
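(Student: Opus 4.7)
The plan is to combine Proposition \ref{prop1} (with $m=1$) with the algebraic identity of Proposition \ref{prop4}, then recognise the ``deviation'' term as a scalar multiple of the norm that appears on the right-hand side of the statement.

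First I would invoke Proposition \ref{prop1} in the simplest case $m=1$. Since the centroid of $P$ is $0$, we have $\sum_\nu z_\nu = 0$, so the proposition applies and gives
\begin{equation*}
\|\dot z\|^2 - 4\sin^2\!\left(\tfrac{\pi}{k}\right)\|z\|^2 \;\ge\; 0,
\end{equation*}
with equality iff $\zeta_\nu = 0$ for $1<\nu<k-1$.

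Next I would insert Proposition \ref{prop4}, which rewrites the left-hand side of this inequality exactly as
\begin{equation*}
2\cos^2\!\left(\tfrac{\pi}{k}\right)\bigl(S(P)-4\tan(\tfrac{\pi}{k})F(P)\bigr) - \bigl\|\dot z - 2i\sin(\tfrac{\pi}{k})e^{i\pi/k}z\bigr\|^2 \;\ge\; 0.
\end{equation*}
The key algebraic manoeuvre is then to pull the scalar $-2i\sin(\pi/k)e^{i\pi/k}$ out of the second norm. Since $\boldsymbol t = \dot z$ and $\tfrac{1}{2i\sin(\pi/k)e^{i\pi/k}} = -\tfrac{ie^{-i\pi/k}}{2\sin(\pi/k)}$, one checks directly that
\begin{equation*}
\dot z - 2i\sin(\tfrac{\pi}{k})e^{i\pi/k}z \;=\; -2i\sin(\tfrac{\pi}{k})e^{i\pi/k}\Bigl(z + \tfrac{ie^{-i\pi/k}\boldsymbol t}{2\sin(\pi/k)}\Bigr),
\end{equation*}
so its squared norm equals $4\sin^2(\pi/k)\bigl\|z + ie^{-i\pi/k}\boldsymbol t/(2\sin(\pi/k))\bigr\|^2$. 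Substituting and dividing both sides by $2\cos^2(\pi/k)$ produces the stated inequality with constant $2\tan^2(\pi/k)$.

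For the equality case, the two steps above are linked by an identity, so the only inequality used is Proposition \ref{prop1} with $m=1$. Its equality condition says $\zeta_\nu = 0$ for all $\nu$ with $1 < \nu < k-1$; together with the centroid condition $\zeta_0 = 0$, this leaves only $\zeta_1$ and $\zeta_{k-1}$ possibly nonzero, giving $P = \zeta_1 R_1 + \zeta_{k-1} R_{k-1}$. No real obstacle is expected here; the whole argument is essentially a bookkeeping step that fuses Propositions \ref{prop1} and \ref{prop4} and identifies the correct scalar factor, with the mild subtlety being the sign and phase tracking in the factorisation $-2i\sin(\pi/k)e^{i\pi/k}$.
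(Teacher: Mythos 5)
Your proposal is correct and is essentially the paper's own proof: the paper likewise combines Proposition \ref{prop4} with the $m=1$ discrete Wirtinger inequality from Proposition \ref{prop1} and reads off the equality case from the latter. The only difference is that you spell out the scalar factorisation $\dot z - 2i\sin(\tfrac{\pi}{k})e^{i\pi/k}z = -2i\sin(\tfrac{\pi}{k})e^{i\pi/k}\bigl(z + \tfrac{ie^{-i\pi/k}\boldsymbol t}{2\sin(\pi/k)}\bigr)$, which the paper leaves implicit, and your phase and sign bookkeeping checks out.
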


\begin{proof}

This follows from Proposition \ref{prop4} and the discrete Wirtinger inequality $\|\dot{z}\|^2\ge 4 \sin^2\left(\frac{\pi}{k}\right)\|z\|^2$ given that $z$ has centroid $0$.

If the equality holds, by the equality case of Proposition \ref{prop1}, we have $z=\zeta_1 R_1+\zeta_{k-1}R_{k-1}$.
\end{proof}

\begin{corollary}
For an equilateral $k$-gon $P$ with perimeter $L, L^{2} \ge 4 k \tan \left(\frac{\pi}{k}\right)|F(P)|$, and equality holds if and only if $P$ is a regular $k$-gon.
\end{corollary}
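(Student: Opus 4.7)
The plan is to combine the Chakerian-type inequality of Theorem \ref{thm1} with the elementary fact that an equilateral polygon has $S(P) = L^{2}/k$. Since $S$, $L$, and $F$ are translation-invariant, I first translate $P$ so that its centroid lies at the origin without changing any of the relevant quantities. The equilateral hypothesis then gives
\begin{equation*}
S(P)=\sum_{j=0}^{k-1}\left|z_{j+1}-z_{j}\right|^{2}=k\cdot\left(\frac{L}{k}\right)^{2}=\frac{L^{2}}{k}.
\end{equation*}
Theorem \ref{thm1} gives $S(P)-4\tan(\pi/k)F(P)\ge 0$ (the right-hand side there being a nonnegative squared norm), so multiplying by $k$ yields $L^{2}\ge 4k\tan(\pi/k)F(P)$.

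To upgrade $F(P)$ to $|F(P)|$, I apply the same argument to the reversed polygon $\bar P = (\bar z_{0},\ldots,\bar z_{k-1})$, which is still equilateral with the same perimeter $L$ and the same $S$, but has signed area $F(\bar P)=-F(P)$. This gives the inequality with $-F(P)$ in place of $F(P)$, and combining the two cases yields $L^{2}\ge 4k\tan(\pi/k)|F(P)|$, as claimed.

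For the equality case, suppose $L^{2}=4k\tan(\pi/k)|F(P)|$; by reversing orientation if necessary I may assume $F(P)\ge 0$, so that $S(P)-4\tan(\pi/k)F(P)=0$. Theorem \ref{thm1} then forces both the nonnegative term $2\tan^{2}(\pi/k)\bigl\|z+ie^{-i\pi/k}\boldsymbol{t}/(2\sin(\pi/k))\bigr\|^{2}$ on the right to vanish and the underlying Wirtinger inequality $\|\dot z\|^{2}\ge 4\sin^{2}(\pi/k)\|z\|^{2}$ to be an equality, so by the equality clauses of Theorem \ref{thm1} and Proposition \ref{prop1} we obtain $z=\zeta_{1}R_{1}+\zeta_{k-1}R_{k-1}$ together with the identity $\dot z=2i\sin(\pi/k)e^{i\pi/k}z$. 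The main (small) obstacle is extracting the final conclusion from these two facts: since $\omega-1=2i\sin(\pi/k)e^{i\pi/k}$ while $\omega^{-1}-1=-2i\sin(\pi/k)e^{-i\pi/k}$, substituting the expansion of $z$ into $\dot z=2i\sin(\pi/k)e^{i\pi/k}z$ yields $\zeta_{k-1}\bigl(-4i\sin(\pi/k)\cos(\pi/k)\bigr)R_{k-1}=0$, and since $k\ge 3$ this forces $\zeta_{k-1}=0$. Hence $z=\zeta_{1}R_{1}$, which is a positively-oriented regular $k$-gon centered at the origin. Conversely, a direct computation on $z=\zeta_{1}R_{1}$ gives $L=2k|\zeta_{1}|\sin(\pi/k)$ and $|F|=k|\zeta_{1}|^{2}\sin(\pi/k)\cos(\pi/k)$, from which $L^{2}=4k\tan(\pi/k)|F|$ follows immediately.
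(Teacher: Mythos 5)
Your proof is correct and follows the route the paper intends: the corollary is stated as an immediate consequence of Theorem \ref{thm1} via the identity $S(P)=L^{2}/k$ for equilateral polygons, with orientation reversal supplying the absolute value. Your equality analysis (vanishing of the norm term forces $\dot z=2i\sin(\pi/k)e^{i\pi/k}z$, which kills $\zeta_{k-1}$ and leaves $z=\zeta_{1}R_{1}$) correctly fills in the rigidity step that the paper leaves implicit.
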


If we are allowed to weaken the lower bound in Theorem \ref{thm1}, we can obtain a similar result with a better rigidity case.
\begin{theorem} [Discrete Chakerian's isoperimetric inequality]\label{thm chakerian 2}
Let $k \ge 3$. For any $k$-gon $P$ with centroid 0,
$$
S(P)-4 \tan \left(\frac{\pi}{k}\right) F(P) \ge 2 \sin^{2}\left(\frac{\pi}{k}\right)\left\|z+\frac{i e^{-i \frac{\pi}{k}} \boldsymbol{t}}{2 \sin \left(\frac{\pi}{k}\right)}\right\|^{2}.
$$
The equality holds if and only if $P$ is a positively oriented regular $k$-gon.
\end{theorem}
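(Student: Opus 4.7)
The plan is to follow the same strategy that gave Theorem~\ref{thm1}, but now to absorb only part of the nonnegative Wirtinger slack into the desired bound, so that equality requires \emph{both} the Wirtinger inequality and the term $\|z+\tfrac{ie^{-i\pi/k}\boldsymbol t}{2\sin(\pi/k)}\|^2$ to be tight simultaneously. Concretely, I would start from the identity of Proposition~\ref{prop4}
\begin{equation*}
2\cos^{2}\!\left(\tfrac{\pi}{k}\right)\bigl(S(P)-4\tan\!\left(\tfrac{\pi}{k}\right)F(P)\bigr)
=\bigl(\|\dot z\|^{2}-4\sin^{2}\!\left(\tfrac{\pi}{k}\right)\|z\|^{2}\bigr)
+\bigl\|\dot z-2i\sin\!\left(\tfrac{\pi}{k}\right)e^{i\pi/k}z\bigr\|^{2},
\end{equation*}
and rewrite the last term as $4\sin^{2}(\pi/k)\bigl\|z+\tfrac{ie^{-i\pi/k}\boldsymbol t}{2\sin(\pi/k)}\bigr\|^{2}$, which is immediate by factoring $-2i\sin(\pi/k)e^{i\pi/k}$ out of $\dot z-2i\sin(\pi/k)e^{i\pi/k}z$.

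The inequality to prove is then equivalent, after multiplying by $2\cos^{2}(\pi/k)$ and rearranging, to
\begin{equation*}
\bigl(\|\dot z\|^{2}-4\sin^{2}\!\left(\tfrac{\pi}{k}\right)\|z\|^{2}\bigr)
+4\sin^{4}\!\left(\tfrac{\pi}{k}\right)\Bigl\|z+\tfrac{ie^{-i\pi/k}\boldsymbol t}{2\sin(\pi/k)}\Bigr\|^{2}\ge 0,
\end{equation*}
which holds because each term is nonnegative: the first is the basic discrete Wirtinger inequality $\|\dot z\|^{2}\ge 4\sin^{2}(\pi/k)\|z\|^{2}$ (the $m=1$ case of Proposition~\ref{prop1}, valid because $\zeta_{0}=0$ by the centroid hypothesis), and the second is a squared norm. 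This is a strictly weaker reduction than in Theorem~\ref{thm1}, where all of the Wirtinger slack was discarded; here we keep a portion of it, which sharpens the rigidity.

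The main task is the equality analysis. Equality forces both terms to vanish. The first forcing gives, as in Proposition~\ref{prop1}, $z=\zeta_{1}R_{1}+\zeta_{k-1}R_{k-1}$. The second forcing gives $\dot z=2i\sin(\pi/k)e^{i\pi/k}z$. Substituting $z=\zeta_{1}R_{1}+\zeta_{k-1}R_{k-1}$ into this relation and using $\dot R_{1}=(\omega-1)R_{1}=2i\sin(\pi/k)e^{i\pi/k}R_{1}$ and $\dot R_{k-1}=(\omega^{-1}-1)R_{k-1}=-2i\sin(\pi/k)e^{-i\pi/k}R_{k-1}$, the $R_{1}$-coefficients match automatically, while the $R_{k-1}$-coefficients force $\zeta_{k-1}(e^{-i\pi/k}+e^{i\pi/k})=0$, i.e.\ $\zeta_{k-1}\cos(\pi/k)=0$. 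Since $k\ge 3$, $\cos(\pi/k)\ne 0$, so $\zeta_{k-1}=0$ and $z=\zeta_{1}R_{1}$, a positively oriented regular $k$-gon centred at $0$. Conversely, for such a $z$ one checks directly that $z+\tfrac{ie^{-i\pi/k}\boldsymbol t}{2\sin(\pi/k)}=0$ and $S-4\tan(\pi/k)F=0$, so equality holds. The only slightly delicate step is this orientation discrimination — verifying that the combined vanishing of the two nonnegative terms rules out the negatively oriented regular $k$-gon $R_{k-1}$, which was admissible in Theorem~\ref{thm1}.
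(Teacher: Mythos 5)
Your proof is correct, and it takes a genuinely different route from the paper's. The paper proves this theorem by going back to the intermediate identity \eqref{pf1}, expanding $-\frac{1}{2}\mathrm{Im}\langle e^{i\pi/k}R_\nu,\dot R_\nu\rangle$ mode by mode, and then comparing $\frac{\cos((\nu-1)\pi/k)}{\cos(\nu\pi/k)}$ against $\frac{1}{\cos(\pi/k)}$ in three cases ($\nu\le\lfloor k/2\rfloor$, $\nu>\lfloor k/2\rfloor$, and $\cos(\nu\pi/k)=0$); the improved rigidity there comes from the strictness of the estimate \eqref{ineq3} at the mode $\nu=k-1$. You instead multiply the claimed inequality by $2\cos^{2}(\pi/k)$ and use the identity of Proposition~\ref{prop4} to rewrite it as the manifestly true statement
\begin{equation*}
\bigl(\|\dot z\|^{2}-4\sin^{2}(\pi/k)\|z\|^{2}\bigr)+4\sin^{4}(\pi/k)\Bigl\|z+\tfrac{ie^{-i\pi/k}\boldsymbol t}{2\sin(\pi/k)}\Bigr\|^{2}\ge 0,
\end{equation*}
a sum of the Wirtinger slack and a squared norm; I checked the algebra ($4\sin^{2}-4\sin^{2}\cos^{2}=4\sin^{4}$) and the factoring $\|\dot z-2i\sin(\pi/k)e^{i\pi/k}z\|^{2}=4\sin^{2}(\pi/k)\|z+\tfrac{ie^{-i\pi/k}\boldsymbol t}{2\sin(\pi/k)}\|^{2}$, and both are right. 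This is shorter, avoids the trigonometric case analysis entirely, and makes transparent exactly why weakening the coefficient from $\tan^{2}$ (Theorem~\ref{thm1}) to $\sin^{2}$ buys the orientation in the rigidity statement: equality now forces $\dot z=2i\sin(\pi/k)e^{i\pi/k}z$, which kills $\zeta_{k-1}$ since $\cos(\pi/k)\ne 0$ for $k\ge 3$. Two minor remarks: the forcing $z+\tfrac{ie^{-i\pi/k}\boldsymbol t}{2\sin(\pi/k)}=0$ by itself already implies $\zeta_\nu=0$ for all $\nu\ne 1$ (since $\omega_\nu-1=\omega_1-1$ only for $\nu=1$), so your appeal to the equality case of Proposition~\ref{prop1} is harmless but redundant; and, exactly as in the paper, the conclusion $z=\zeta_{1}R_{1}$ technically admits the degenerate polygon $\zeta_{1}=0$.
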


\begin{proof}
By \eqref{pf1} and the discrete Wirtinger inequality $\|\dot{z}\|^{2} \ge 4 \sin^{2}\left(\frac{\pi}{k}\right)\|z\|^{2}$,
\begin{equation*}
\begin{split}
\left\|\dot{z}-2 i \sin \left(\frac{\pi}{k}\right) e^{i \frac{i}{k}} z\right\|^{2}
\le& 2\|\dot z\|^{2}-8 \sin \left(\frac{\pi}{k}\right)\left[-\frac{1}{2} \sum_{\nu=1}^{k-1}\left|\zeta_{\nu}\right|^{2} \mathrm{Im}\left\langle e^{i \frac{\pi}{k}} R_{\nu}, \dot{R}_{\nu}\right\rangle\right]\\
=& 2\|\dot z\|^{2}-8 \sin \left(\frac{\pi}{k}\right)\left[\sum_{\nu=1}^{k-1}|\zeta_\nu|^2\cdot k \sin \left(\frac{\nu \pi}{k}\right) \cos \left(\frac{(\nu-1) \pi}{k}\right)\right].
\end{split}
\end{equation*}
As $F(R_\nu)=k \sin \left(\frac{\nu \pi}{k}\right) \cos \left(\frac{\nu \pi}{k}\right)$, we have $\mathrm{Im}\left\langle e^{i\frac{\pi}{k}}R_{\nu}, \dot{R_{\nu}}\right\rangle=\frac{\cos \left(\frac{(\nu-1) \pi}{k}\right)}{\cos \frac{\nu \pi}{k}} F\left(R_{\nu}\right)$ unless $\cos \frac{\nu \pi}{k}=0$.
In the case where $\cos \frac{\nu \pi}{k}=0$, $k=2\nu$ and so $-\frac{1}{2}\mathrm{Im}\langle e^{i\frac{\pi}{k}}R_\nu, \dot{R_\nu}\rangle =k\sin \left(\frac{\pi}{k}\right)>0$ and $F(R_\nu)=0$.
By combining with the $m=1$ case of Proposition \ref{prop1}, we arrive at
\begin{equation}\label{ineq1}
\begin{split}
\left\|\dot{z}-2 i \sin \frac{\pi}{k} e^{i \frac{\pi}{k}} z\right\|^{2} \le 2\|\dot{z}\|^{2}-8 \sin \left(\frac{\pi}{k}\right) \sum_{0<\nu<k, \nu \ne \frac{k}{2}}\left|\zeta_{\nu}\right|^{2} \frac{\cos \left(\frac{(\nu-1) \pi}{k}\right)}{\cos \left(\frac{\nu \pi}{k}\right)} F\left(R_{\nu}\right).
\end{split}
\end{equation}

If $\nu \le \left\lfloor\frac{k}{2}\right\rfloor$ and $\nu\ne \frac{k}{2}$, then $F(R_\nu)\ge 0$ and
$$\frac{\cos \frac{(\nu-1) \pi}{k}}{\cos \left(\frac{\nu \pi}{k}\right)}=\cos \left(\frac{\pi}{k}\right)+\tan \left(\frac{\nu \pi}{k}\right) \sin \left(\frac{\pi}{k}\right) \ge \cos \left(\frac{\pi}{k}\right)+ \tan \left(\frac{\pi}{k}\right) \sin \left(\frac{\pi}{k}\right)=\frac{1}{\cos \left(\frac{\pi}{k}\right)}. $$

If $ \nu>\left\lfloor\frac{k}{2}\right\rfloor$, then $F(R_\nu)<0$, and
\begin{equation}\label{ineq3}
\begin{split}
-\frac{\cos \left(\frac{(\nu-1) \pi}{k}\right)}{\cos \left(\frac{\nu \pi}{k}\right)} F\left(R_{\nu}\right) & =\frac{\cos \left(\frac{(\nu-1) \pi}{k}\right)}{\cos \left(\frac{\nu \pi}{k}\right)}\left|F\left(R_{\nu}\right)\right| \\
& =\left(\cos \left(\frac{\pi}{k}\right)+\tan \left(\frac{\nu \pi}{k}\right) \sin \left(\frac{\pi}{k}\right)\right)\left|F\left(R_{\nu}\right)\right| \\
& \le \left(\cos \left(\frac{\pi}{k}\right)+\tan \left(\frac{\pi}{k}\right) \sin \left(\frac{\pi}{k}\right)\right)\left|F\left(R_{\nu}\right)\right| \\
& =-\frac{1}{\cos \left(\frac{\pi}{k}\right)} F\left(R_{\nu}\right).
\end{split}
\end{equation}

In view of \eqref{ineq1}, we have
\begin{equation*}
\begin{split}
\left\|\dot{z}-2 i \sin \left(\frac{\pi}{k}\right) e^{i \frac{\pi}{k}} z\right\|^{2} & \le 2\|\dot{z}\|^{2}-8 \sin \left(\frac{\pi}{k}\right) \sum_{\substack{0<\nu<k\\
\nu \ne \frac{k}{2}}}\left|\zeta_{\nu}\right|^{2} \frac{\cos \left(\frac{(\nu-1) \pi}{k}\right)}{\cos \left(\frac{\nu \pi}{k}\right)} F\left(R_{\nu}\right) \\
& \le 2\left\|\dot{z}\right\|^{2}-8 \tan \left(\frac{\pi}{k}\right) \sum_{\nu=1}^{k-1}\left|\zeta_{\nu}\right|^{2} F\left(R_{\nu}\right) \\
& =2\left(S(P)-4 \tan \left(\frac{\pi}{k}\right) F(P)\right).
\end{split}
\end{equation*}

If the equality holds, by the equality case of Proposition \ref{prop1}, we have $z=\zeta_1 R_1+\zeta_{k-1}R_{k-1}$.
When $\nu=k-1$, we have $\nu>\left\lfloor \frac{k}{2}\right\rfloor$ and the inequality in \eqref{ineq3} is strict. Therefore \eqref{ineq1} is strict unless $\zeta_{k-1}=0$.
Hence $z$ represents a regular convex $k$-gon.
\end{proof}

In \cite{kwong2021higher}, the authors proved that for a smooth closed curve $C$ on the plane whose centroid is $0$,
\begin{equation}\label{KL}
\int_{C}\left|z-\left(\frac{L}{2 \pi}\right) \boldsymbol{n}\right|^{2} d s+\frac{1}{3} \int_{C}\left|z+\left(\frac{L}{2 \pi}\right)^{2} \boldsymbol{\kappa}\right|^{2} d s-\frac{L}{2 \pi^{2}}\left(L^{2}-4 \pi F\right) \ge 0.
\end{equation}
Here $\boldsymbol{n}$ is the unit outward normal, $z$ is the position vector, $L$ is the length of $C $, $F$ is the area enclosed by $C$ and $\boldsymbol{\kappa}$ is the (smooth) curvature vector.

In the following, we are going to give the discrete analogue of this result. Let us remark that for a regular $k$-gon whose centroid is $0$, the curvature vector is equal to $-4\sin^2\left(\frac{\pi}{k}\right)z$, and so the term $ \boldsymbol{\kappa}+4 \sin^{2}\left(\frac{\pi}{k}\right) z $ is natural in the following result.

\begin{theorem}\label{thm discrete higher}
Let $m \le \left\lfloor\frac{k}{2}\right\rfloor$ be a natural number. Suppose $P$ is a $k$-gon represented by $z$ whose centroid is $0$. Then
\begin{equation*}
\begin{split}
0\le& 2 S_{m, 0} \cos^{2}\left(\frac{\pi}{k}\right)\left(S(P)-4\tan \left(\frac{\pi}{k}\right) F(P)\right)-S_{m, 0}\left\|\boldsymbol{t}-2 i \sin \left(\frac{\pi}{k}\right) e^{i \frac{\pi}{k}} z\right\|^{2} \\
& +\sum_{l=1}^{m-1} S_{m, l}\left\|D^{l-1}\left(\boldsymbol{\kappa}+4 \sin^{2}\left(\frac{\pi}{k}\right) z\right)\right\|^{2}.
\end{split}
\end{equation*}
Here $S_{m, l}$ are defined in Proposition \ref{prop s}.
The equality holds if and only if the Fourier coefficients of $z$ satisfy $\zeta_{\nu}=0$ for $m<\nu<k-m$.
\end{theorem}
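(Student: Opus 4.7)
The plan is to start from the Wirtinger-type inequality in Proposition \ref{prop s} and rewrite each of its terms in the geometric language of the theorem. Concretely, Proposition \ref{prop s} furnishes
\[
0 \le S_{m,0}\bigl(\|\dot z\|^{2}-4\sin^{2}(\tfrac{\pi}{k})\|z\|^{2}\bigr)
+\sum_{l=1}^{m-1} S_{m,l}\,\bigl\|\tau z^{(l+1)}+4\sin^{2}(\tfrac{\pi}{k})\,z^{(l-1)}\bigr\|^{2},
\]
and the theorem will follow once I replace the $\|\dot z\|^{2}-4\sin^{2}(\tfrac{\pi}{k})\|z\|^{2}$ piece by the expression from Proposition \ref{prop4} and rewrite each summand in the second sum in terms of $\boldsymbol\kappa=\tau\ddot z$.

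For the first piece, Proposition \ref{prop4} gives the identity
\[
\|\dot z\|^{2}-4\sin^{2}(\tfrac{\pi}{k})\|z\|^{2}
=2\cos^{2}(\tfrac{\pi}{k})\bigl(S(P)-4\tan(\tfrac{\pi}{k})F(P)\bigr)
-\bigl\|\boldsymbol t-2i\sin(\tfrac{\pi}{k})e^{i\frac{\pi}{k}}z\bigr\|^{2},
\]
where I use $\boldsymbol t=\dot z$. Multiplying by $S_{m,0}$ produces exactly the first two terms on the right-hand side of the theorem.

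For the higher-order pieces, the key observation is that the shift $\tau$ commutes with the discrete derivative $D$, since $(\tau\dot z)_\nu=z_\nu-z_{\nu-1}=(\dot{\tau z})_\nu$. Hence for $l\ge 1$,
\[
\tau z^{(l+1)}+4\sin^{2}(\tfrac{\pi}{k})\,z^{(l-1)}
=D^{l-1}\bigl(\tau\ddot z+4\sin^{2}(\tfrac{\pi}{k})\,z\bigr)
=D^{l-1}\bigl(\boldsymbol\kappa+4\sin^{2}(\tfrac{\pi}{k})\,z\bigr),
\]
by the definition $\boldsymbol\kappa=\tau\ddot z$. Substituting this into the second sum yields the remaining $\sum_{l=1}^{m-1}S_{m,l}\|D^{l-1}(\boldsymbol\kappa+4\sin^{2}(\tfrac{\pi}{k})z)\|^{2}$ term of the theorem.

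I do not expect a serious obstacle here: Proposition \ref{prop s} already encodes the inequality and Proposition \ref{prop4} is an identity, so the assembly is purely algebraic. The one thing to verify carefully is the $\tau$–$D$ commutation used above, but this is immediate from the definitions. For the rigidity clause, since Proposition \ref{prop4} is an equality and the $\tau$–$D$ rewriting is also an equality, the only inequality entering the proof is the one in Proposition \ref{prop s}; hence its equality characterization, namely $\zeta_\nu=0$ for $m<\nu<k-m$, transfers verbatim to the theorem.
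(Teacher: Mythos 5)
Your proposal is correct and follows exactly the paper's route: the paper's proof is a one-line citation of Propositions \ref{prop s} and \ref{prop4}, and you have simply filled in the (correct) details, including the $\tau$--$D$ commutation needed to identify $\tau z^{(l+1)}+4\sin^{2}(\tfrac{\pi}{k})z^{(l-1)}$ with $D^{l-1}(\boldsymbol\kappa+4\sin^{2}(\tfrac{\pi}{k})z)$. The transfer of the rigidity case is also handled as the paper intends.
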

\begin{proof}
This follows from Proposition \ref{prop s} and Proposition \ref{prop4}.
\end{proof}
\begin{remark}
For $m \le \left\lfloor\frac{k}{2}\right\rfloor$, note that $S_{m, 0}=4^{m-1}\prod_{j=2}^{m}\left(\sin^{2}\left(\frac{\pi}{k}\right)-\sin^{2}\left(\frac{j\pi}{k}\right)\right)$ is positive when $m$ is odd and negative when $m$ is even.
So in the case where $m$ is even, Theorem \ref{thm discrete higher} gives the following upper bound:
\begin{align*}
& 2 \cos^{2}\left(\frac{\pi}{k}\right)\left(S(P)-4 \tan \left(\frac{\pi}{k}\right) F(P)\right) \\
\le & \left\|\boldsymbol{t}-2 i \sin \left(\frac{\pi}{k}\right) e^{i \frac{\pi}{k}} z\right\|^{2}-\widetilde{S}_{m, 0}^{-1}\sum_{l=1}^{m-1} S_{m, l}\left\|D^{l-1}\left(\boldsymbol{\kappa}+4 \sin^{2}\left(\frac{\pi}{k}\right) z\right)\right\|^{2}.
\end{align*}
In the case where $m$ is odd, Theorem \ref{thm discrete higher} gives the following lower bound:
\begin{align*}
& 2 \cos^{2}\left(\frac{\pi}{k}\right)\left(S(P)-4 \tan \left(\frac{\pi}{k}\right) F(P)\right) \\
\ge & \left\|\boldsymbol{t}-2 i \sin \left(\frac{\pi}{k}\right) e^{i \frac{\pi}{k}} z\right\|^{2}- {\widetilde S_{m, 0}}^{-1} \sum_{l=1}^{m-1} S_{m, l}\left\|D^{l-1}\left(\boldsymbol{\kappa}+4 \sin^{2}\left(\frac{\pi}{k}\right) z\right)\right\|^{2}.
\end{align*}
Here $S_{m, l}$ are defined in Proposition \ref{prop s} and $\widetilde S_{m, 0}=|S_{m, 0}|=4^{m-1} \prod_{j=2}^{m}\left(\sin^{2}\left(\frac{j\pi}{k}\right)-\sin^{2}\left(\frac{ \pi}{k}\right)\right)$.
\end{remark}

\begin{corollary}
Let $m < \frac{k}{2} $ be an even natural number. Suppose $P$ is a $k$-gon represented by $z$ whose centroid is $0$. Then
\begin{equation*}
\begin{split}
& 2\cos^{2}\left(\frac{\pi}{k}\right)\left(L(P)^2-4 k\tan \left(\frac{\pi}{k}\right) F(P)\right) \\
\le & k \left\|\boldsymbol{t}-2 i \sin \left(\frac{\pi}{k}\right) e^{i \frac{\pi}{k}} z\right\|^{2}- k\widetilde{S}_{m, 0}^{-1} \sum_{l=1}^{m-1} S_{m, l}\left\|D^{l-1}\left(\boldsymbol\kappa +4 \sin^{2}\left(\frac{\pi}{k}\right) z\right)\right\|^{2}.
\end{split}
\end{equation*}

Here $\widetilde{S}_{m, 0}=\left|S_{m, 0}\right|=4^{m-1} \prod_{j=2}^{m}\left(\sin^{2}\left(\frac{j \pi}{k}\right)-\sin^{2}\left(\frac{\pi}{k}\right)\right)$.

The equality holds if and only if $ P$ is represented by $z=\zeta_\nu R_\nu$ for some $0<|\nu|\le m$.
\end{corollary}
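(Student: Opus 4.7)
The strategy is to combine two ingredients: the $m$-even upper-bound form of Theorem~\ref{thm discrete higher} (as recorded in the preceding remark) and the elementary Cauchy--Schwarz inequality bridging the sum-of-squared-sides functional $S(P)$ and the squared perimeter $L(P)^{2}$.

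For the first ingredient, since $m$ is even we have $S_{m,0}<0$ and $\widetilde S_{m,0}=|S_{m,0}|$, and Theorem~\ref{thm discrete higher} rearranges (as in the remark) to give
\[
2\cos^{2}(\pi/k)\bigl(S(P)-4\tan(\pi/k)F(P)\bigr)\le \|\boldsymbol{t}-2i\sin(\pi/k)e^{i\pi/k}z\|^{2}-\widetilde{S}_{m,0}^{-1}\sum_{l=1}^{m-1}S_{m,l}\|D^{l-1}(\boldsymbol\kappa+4\sin^{2}(\pi/k)z)\|^{2}.
\]
Multiplying both sides by $k$ produces precisely the right-hand side of the corollary.

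For the second ingredient, applying Cauchy--Schwarz to the vectors $(1,\dots,1)$ and $(|z_{\nu+1}-z_\nu|)_{\nu=0}^{k-1}$ in $\mathbb R^{k}$ yields
\[
L(P)^{2}=\Bigl(\sum_{\nu=0}^{k-1}|z_{\nu+1}-z_\nu|\Bigr)^{2}\le k\sum_{\nu=0}^{k-1}|z_{\nu+1}-z_\nu|^{2}=k\,S(P).
\]
Subtracting $4k\tan(\pi/k)F(P)$ from both sides (no sign condition on $F$ is required) gives $L(P)^{2}-4k\tan(\pi/k)F(P)\le k\bigl(S(P)-4\tan(\pi/k)F(P)\bigr)$, and multiplying by $2\cos^{2}(\pi/k)$ and chaining with the first displayed inequality produces the claimed bound.

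The step I expect to require the most care is the rigidity analysis. Equality in Cauchy--Schwarz forces $P$ to be equilateral, while the equality case of Theorem~\ref{thm discrete higher} forces the Fourier support of $z$ to lie in $\{j:0<|j|\le m\}$. The plan is to expand $|z_{\nu+1}-z_\nu|^{2}$ for $z=\sum_{0<|j|\le m}\zeta_j R_j$ and exploit the orthogonality of the characters $\nu\mapsto\omega^{d\nu}$: constancy in $\nu$ forces every cross coefficient $\zeta_j\bar\zeta_l(\omega^j-1)\overline{(\omega^l-1)}$ with $j\not\equiv l\pmod{k}$ to vanish, which (since $\omega^j\neq 1$ for all admissible indices) rules out having two or more nonzero $\zeta_j$ simultaneously, leaving exactly the pure-mode polygons $z=\zeta_j R_j$ with $0<|j|\le m$ asserted in the statement.
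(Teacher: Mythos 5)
Your derivation of the inequality itself follows the paper's route exactly: combine the $m$-even rearrangement of Theorem \ref{thm discrete higher} with the Cauchy--Schwarz bound $L(P)^{2}\le kS(P)$, and your bookkeeping (multiplying by $k$, subtracting $4k\tan(\pi/k)F(P)$, chaining the two inequalities) is correct. The problem is in your rigidity sketch. From the constancy of $|z_{\nu+1}-z_{\nu}|^{2}$ in $\nu$ and the uniqueness of finite Fourier coefficients you may conclude only that, for each nonzero frequency $d$, the \emph{sum}
\[
c_{d}=\sum_{j-l\equiv d\ (\mathrm{mod}\ k)}\zeta_{j}\bar\zeta_{l}\,(\omega^{j}-1)\overline{(\omega^{l}-1)}
\]
vanishes. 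You cannot conclude that ``every cross coefficient'' vanishes individually, because distinct pairs $(j,l)$ with $0<|j|,|l|\le m$ can share the same difference $d$ (for instance $(2,1)$ and $(3,2)$ both contribute to $c_{1}$). So the step ``constancy in $\nu$ forces every cross coefficient to vanish'' is false as stated, and the claim that at most one $\zeta_{j}$ survives does not follow from orthogonality alone.

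The gap is genuine but fixable, and the paper closes it with Lemma \ref{lem nonzero}: writing $a_{p}=\zeta_{p}(\omega^{p}-1)$ for the Fourier coefficients of $\dot z$, the vanishing of all $c_{d}$ with $d\ne 0$ is exploited by a descending induction starting at the extreme frequency $d=2m$, where the sum reduces to the single term $a_{m}\overline{a_{-m}}$; this forces one of $a_{\pm m}$ to vanish, and peeling off the frequencies $2m-1,2m-2,\dots$ one at a time kills all remaining coefficients but one. You need this combinatorial induction (or an equivalent argument) to finish the equality case; once it is in place, the conclusion $z=\zeta_{\nu}R_{\nu}$ for some $0<|\nu|\le m$ follows as you describe, using that $z$ has centroid $0$ so the mode $\nu=0$ is absent.
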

\begin{proof}
The inequality follows from Theorem \ref{thm discrete higher} and the Cauchy-Schwarz inequality $kS(P)\ge L^2(P)$. By the rigidity case of Theorem \ref{thm discrete higher}, the Fourier coefficients of $\dot z=\sum_{\nu=1}^{k-1}\xi_\nu R_\nu$ satisfy $\xi_{\nu}=0$ for $m<\nu<k-m$.
By the equality case of the Cauchy-Schwarz inequality, $\left|\dot{z}_{\nu}\right|^{2}$ is a constant which is independent of $\nu$. Since
$\left(R_{n}\right)_\nu=e^{\frac{2 \pi i n \nu}{k} }$, we have
\begin{equation*}
\left|\dot{z}_{\nu}\right|^{2}=\sum_{n=-2 m}^{2 m}\left(\sum_{n_{1}-n_{2}=n} \xi_{n_{1}} \overline {\xi}_{n_{2}} \right) e^{\frac{2 \pi i n \nu}{k}} =\sum_{n=-2m}^{2m} c_{n} \omega_\nu^{n}
=\text { constant }
\end{equation*}
for all $\nu$, where we have identified $\xi_{k-l}$ with $\xi_{-l}$. By the uniqueness of Fourier series, for $n\ne 0$, we have
$c_n=\sum_{n_{1}-n_{2}=n} \xi_{n_{1}} \bar{\xi}_{n_{2}}=0$. By Lemma \ref{lem nonzero} below, at most one of the coefficients, say $\xi_\nu$, can be non-zero. As $\dot R_\nu=(\omega_\nu-1)R_\nu$ and $z$ has centroid zero, we conclude that $z=\frac{\xi_\nu}{\omega_\nu-1}R_\nu$.
\end{proof}

\begin{lemma}\label{lem nonzero}
Let $m\in \mathbb N$ and suppose $ a_{-m}, a_{-m+1}, \cdots, a_{-1}, a_{1}, \cdots, a_m $ are complex numbers such that for all $n\ne 0$,
$c_n=\sum_{\substack {p-q=n \\
0<|p|, |q|\le m}} a_p \overline {a_{q}}=0
$.
Then at most one of the $a_p$ is non-zero.
\end{lemma}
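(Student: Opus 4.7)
The plan is to argue by contradiction using an extremality argument on the support of the coefficients. I would let $S = \{p : 0 < |p| \le m,\ a_p \neq 0\}$ and suppose, toward a contradiction, that $|S| \ge 2$. The task is then to single out one index $n \ne 0$ for which the hypothesis $c_n = 0$ must fail.

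The natural choice will be to look at the extremal elements of $S$: set $p_* := \max S$ and $p_\# := \min S$, and define $n := p_* - p_\# > 0$. In the sum
\[
c_n = \sum_{\substack{p - q = n \\ 0 < |p|, |q| \le m}} a_p \overline{a_q},
\]
only pairs with both $p, q \in S$ contribute, and for any such pair one has $p \le p_*$ and $q \ge p_\#$, so $p - q \le p_* - p_\# = n$, with equality exactly when $p = p_*$ and $q = p_\#$. This forces $c_n = a_{p_*} \overline{a_{p_\#}} \ne 0$, contradicting the hypothesis, so $|S| \le 1$.

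I do not anticipate any real obstacle here: the whole argument is a one-line extremality observation once the right $n$ is chosen. As a conceptual cross-check, one may rephrase the hypothesis as the assertion that the Laurent polynomial $f(z) := \sum_{0 < |p| \le m} a_p z^p$ has constant modulus $\sqrt{c_0}$ on the unit circle, and then invoke the classical fact that such an $f$ must reduce to a single monomial $a_p z^p$; but the direct combinatorial route above is shorter and avoids any complex-analytic machinery.
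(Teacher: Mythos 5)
Your proof is correct. The paper proves the lemma by induction on $m$: it starts from $c_{2m}=a_m\overline{a_{-m}}=0$, uses a symmetry $a_l\leftrightarrow a_{-l}$ to assume $a_{-m}=0$, splits into the case $a_m=0$ (handled by the induction hypothesis) and $a_m\ne 0$, and in the latter case successively inspects $c_{2m-1}, c_{2m-2},\dots,c_1$ to kill the remaining coefficients one at a time. Your argument exploits the same underlying mechanism --- the maximal value of $p-q$ on the support is attained by exactly one pair --- but implements it in one step by taking the extremal elements $p_*=\max S$ and $p_\#=\min S$ of the actual support rather than the extremal admissible indices $\pm m$. This removes the need for induction, for the WLOG symmetry step, and for the case analysis, so your version is shorter and cleaner; the paper's successive-elimination version has the minor virtue of explicitly identifying which single coefficient survives (namely $a_m$ after the normalization), but that information is not needed for the applications. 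Your closing remark identifying the hypothesis with constancy of $\bigl|\sum_p a_p z^p\bigr|$ on the unit circle is also accurate and matches how the lemma is invoked in the paper (the condition arises there precisely from $|\dot z(t)|^2$ being constant).
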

\begin{proof}
We prove it by induction on $m$. The case where $m=1$ is easy. Assuming the induction hypothesis, we now fix a natural number $m$.

As $c_{2m}=a_m\overline {a_{-m}}=0$, either $a_m=0$ or $a_{-m}=0$. By interchanging $a_{l}$ with $a_{-l}$ if necessary, we may assume $a_{-m}=0$. We can assume $a_{m}\ne0$, for otherwise it reduces to the $(m-1)$-case and we can apply the induction hypothesis, and then the results follows.

We claim that $a_m$ is the only nonzero coefficient.

As $a_{-m}=0$, we have $c_{2m-1}=a_m\overline {a_{-m+1}}=0$. The assumption $a_m\ne0$ then implies $a_{-m+1}=0$. We can go on to consider $c_{2m-2}, c_{2m-3}, \cdots, c_{1}$ one by one to deduce that the remaining coefficients $a_{-m+2}=a_{-m+3}=\cdots=a_{m-1}=0$. This proves the claim.
\end{proof}

\begin{remark}\label{rmk2}
Let us examine the inequalities in Theorem \ref{thm discrete higher} for $m=1$ and $2$.
\begin{enumerate}
\item
As explained before, when $m=1$, the inequality becomes
\begin{align*}
2 \cos^{2}\left(\frac{\pi}{k}\right)\left(S(P)-4 \tan \left(\frac{\pi}{k}\right) F(P)\right)\ge\left\|\boldsymbol{t}-2 i \sin \left(\frac{\pi}{k}\right) e^{i \frac{\pi}{k}} z\right\|^{2}.
\end{align*}
This is the discrete version of Chakerian's sharpened isoperimetric inequality.

\item
When $m=2$ and hence $k\ge 4$, the inequality becomes
\begin{align*}
& 8\left(\sin^{2} \left(\frac{2 \pi}{k}\right)-\sin^{2} \left(\frac{\pi}{k}\right)\right) \cos^{2}\left(\frac{\pi}{k}\right)\left(S(P)-4 \tan \left(\frac{\pi}{k}\right) F(P)\right)\\
\le& 4\left(\sin^{2}\left(\frac{2 \pi}{k}\right)-\sin^{2}\left(\frac{\pi}{k}\right)\right) \left\|\boldsymbol{t}-2 i \sin \left(\frac{\pi}{k}\right) e^{i \frac{\pi}{k}} z\right\|^{2}+\left\| \boldsymbol{\kappa}+4 \sin^{2}\left(\frac{\pi}{k}\right) z \right\|^{2}.
\end{align*}

This is the discrete analogue of \eqref{KL}.
By Proposition \ref{prop stab}, it can also be rearranged so that it gives a measure of the deficit of the discrete Chakerian's isoperimetric inequality:
\begin{equation*}\label{case m=2}
\begin{split}
& 2 \cos^{2}\left(\frac{\pi}{k}\right)\left(S(P)-4 \tan \left(\frac{\pi}{k}\right) F(P)\right)-\left\|\boldsymbol{t}-2 i \sin \left(\frac{\pi}{k}\right) e^{i \frac{\pi}{k}} z\right\|^{2} \\
\le & \frac{1}{4 \sin^{2}\left(\frac{2 \pi}{k}\right)}\left[4 \sin^{2}\left(\frac{\pi}{k}\right)\left(\|\dot{z}\|^{2}-4 \sin^{2}\left(\frac{\pi}{k}\right)\|z\|^{2}\right)+\left\|\boldsymbol{\kappa}+4 \sin^{2}\left(\frac{\pi}{k}\right) z\right\|^{2}\right]\\
=& \frac{1}{4 \sin^{2}\left(\frac{2 \pi}{k}\right)}\left[4 \sin^{2}\left(\frac{\pi}{k}\right)
\left(2 \cos^{2}\left(\frac{\pi}{k}\right)\left(S(P)-4 \tan \left(\frac{\pi}{k}\right) F(P)\right)-\left\|\boldsymbol{t}-2 i \sin \left(\frac{\pi}{k}\right) e^{i \frac{\pi}{k}} z\right\|^{2}\right)\right. \\
& \left. +\left\|\boldsymbol\kappa+4 \sin^{2}\left(\frac{\pi}{k}\right) z\right\|^{2}\right].
\end{split}
\end{equation*}
\item
Similar conclusion holds for $m=3$ and beyond. Please refer to the arXiv version for details.
\end{enumerate}
\end{remark}

\section{The smooth case}\label{sec smooth}

We now give the result analogous to Theorem \ref{thm discrete higher} for a smooth curve. In some sense, it is easier because the geometric quantities are more straightforward to compute and the higher order Wirtinger inequalities are somewhat simpler.

First we need the smooth version of Proposition \ref{prop s}.

\begin{proposition}[\cite{kwong2021higher} Proposition 1]\label{prop higher}
Let $m \in \mathbb N$ and $z: \mathbb R \rightarrow \mathbb C$ be a $2 \pi$-periodic function of class $C^{m}$ with zero mean.Then
\begin{equation}\label{gen ineq}
0 \le \prod_{j=2}^{m}\left(1-j^{2}\right) \int_{0}^{2 \pi}\left(|\dot{z}|^{2}-|z|^{2}\right) d t+\sum_{l=1}^{m-1} s_{m, l} \int_{0}^{2 \pi}\left|z^{(l+1)}+z^{(l-1)}\right|^{2} d t.
\end{equation}

Here, the constants $s_{m, 1}, \cdots, s_{m, m-1}$ are the coefficients of the $(m-2)$-th degree polynomial $\mathrm {S}_{m}(t) \in \mathbb Z [t]$, defined by
$ \mathrm{S}_{m}(t):=\frac{ \mathrm {P}_{m}(t)-\mathrm {P}_{m}(1)}{t-1}=\sum_{l=1}^{m-1} s_{m, l} t^{l-1}$ and $\mathrm{P}_{m}(t):=\prod_{j=2}^{m}\left(t-j^{2}\right)=\left(t-2^{2}\right) \cdots\left(t-m^{2}\right)$.

The equality holds if and only if the function $z$ is of the form
$$
z(t)=\sum_{1\le |n|\le m}a_{n} e^{int}
$$
for some constants $a_{n} \in \mathbb C$.
\end{proposition}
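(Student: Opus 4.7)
The plan is straightforward Fourier analysis, which reduces the inequality to a pointwise sign check on the Fourier side. I would expand $z$ as a Fourier series $z(t) = \sum_{n \in \mathbb Z} a_n e^{int}$; the zero-mean assumption forces $a_0 = 0$. Applying Parseval to each of the three building blocks gives $\int_0^{2\pi}(|\dot z|^2 - |z|^2)\, dt = 2\pi \sum_n (n^2 - 1)|a_n|^2$, and, since $z^{(l+1)} + z^{(l-1)} = \sum_n (in)^{l-1}(1 - n^2) a_n e^{int}$, also $\int_0^{2\pi} |z^{(l+1)} + z^{(l-1)}|^2\, dt = 2\pi \sum_n n^{2(l-1)} (n^2-1)^2 |a_n|^2$.

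Substituting these into the right-hand side of \eqref{gen ineq} and pulling out the common factor $(n^2-1)|a_n|^2$ yields
\[
2\pi \sum_{n \ne 0} |a_n|^2\, (n^2-1) \left[ \mathrm{P}_m(1) + (n^2-1) \sum_{l=1}^{m-1} s_{m,l}\, n^{2(l-1)} \right]
= 2\pi \sum_{n \ne 0} |a_n|^2\, (n^2-1) \bigl[ \mathrm{P}_m(1) + (n^2-1)\, \mathrm{S}_m(n^2) \bigr].
\]
By the very definition $\mathrm{P}_m(t) - \mathrm{P}_m(1) = (t-1)\mathrm{S}_m(t)$, applied with $t = n^2$, the bracket collapses to $\mathrm{P}_m(n^2) = \prod_{j=2}^m (n^2 - j^2)$, and the entire expression simplifies to
\[
2\pi \sum_{n \ne 0} |a_n|^2 \prod_{j=1}^m (n^2 - j^2).
\]

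It then suffices to check the sign of $\prod_{j=1}^m (n^2 - j^2)$ for $n \in \mathbb Z \setminus \{0\}$. If $1 \le |n| \le m$ then one factor vanishes; if $|n| \ge m+1$ then every factor is strictly positive. Hence each summand is $\ge 0$, proving \eqref{gen ineq}. The equality case reads off immediately: the sum is zero precisely when $a_n = 0$ for every $|n| \ge m+1$, which combined with $a_0 = 0$ is exactly $z(t) = \sum_{1 \le |n| \le m} a_n e^{int}$.

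The only subtle point, rather than a real obstacle, is recognizing that the coefficients $s_{m,l}$ have been manufactured, via the polynomial division defining $\mathrm{S}_m$, precisely so that this telescoping occurs. This is the continuous analogue of the rearrangement used in Proposition \ref{prop s}, with the eigenvalues $n^2$ of $-d^2/dt^2$ on the circle playing the role of $4\sin^2(\nu\pi/k)$ in the discrete setting.
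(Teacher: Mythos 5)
Your proof is correct and follows exactly the route the paper intends: Fourier expansion plus Parseval, with the polynomial-division identity $\mathrm{P}_m(1)+(t-1)\mathrm{S}_m(t)=\mathrm{P}_m(t)$ collapsing the right-hand side to $2\pi\sum_{n\neq 0}|a_n|^2\prod_{j=1}^m(n^2-j^2)$, whose terms are manifestly nonnegative. The paper itself only cites \cite{kwong2021higher} for this proposition rather than reproving it, but your argument is precisely the smooth counterpart of the paper's own proofs of Propositions \ref{prop1}--\ref{prop s} (run in reverse, from the telescoped form back to the factored form), so there is nothing to add.
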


\begin{theorem}\label{smooth thm}
Let $C$ be a simple closed $C^{m}$ curve in $\mathbb C$ with length $L$ and a unit speed counter-clockwise parametrization $z (s) $ with $s \in[0, L] $.
Assume that $C =\partial \Omega$ is the boundary curve of a domain $\Omega \subset \mathbb C$ with the area $F$. Let $\boldsymbol{n} $ denotes the outward-pointing unit normal on $C$. Assume $C$ has centroid $0$, then
\begin{equation*}
\begin{split}
0 \le & \sum_{l=1}^{m-1} s_{m, l}\left(\frac{L}{2 \pi}\right)^{2 l-3} \int_{C}\left|\left(\frac{d}{d s}\right)^{l-1}\left(z+\left(\frac{L}{2 \pi}\right)^{2} \boldsymbol{\kappa}\right)\right|^{2} d s \\
& -\frac{(-1)^{m}}{2}(m-1) !(m+1) !\left(\frac{1}{\pi}\left(L^{2}-4 \pi F\right)-\frac{2 \pi}{L} \int_{C}\left|z-\left(\frac{L}{2 \pi}\right) \boldsymbol{n}\right|^{2} d s\right).
\end{split}
\end{equation*}
The equality holds if and only if $z(t)=be^{it}$ for some $0\ne b\in \mathbb C$.
\end{theorem}

\begin{proof}
Using the change of variable $t=\frac{2\pi}{L}s$, we have
$$
z(t)+\ddot{z}(t)=z+\left(\frac{L}{2 \pi}\right)^{2} \boldsymbol{\kappa}.
$$
Therefore
\begin{equation}\label{4}
z^{(l+1)}(t)+z^{(l-1)}(t)=\left(\frac{L}{2 \pi}\right)^{l-1} \left(\frac{d}{d s}\right)^{l-1}\left(z+\left(\frac{L}{2 \pi}\right)^{2} \boldsymbol{\kappa}\right).
\end{equation}
It is easy to see that
$$
\frac{L^{2}}{2 \pi}=\int_{0}^{2 \pi}|\dot{z}|^{2} d t
$$
and
$$
2 F=\iint_{\Omega} \mathrm{div} \left(z\right)\, dxdy=-\int_{0}^{2 \pi} \mathrm{Im}(z \dot{\bar{z}}) d t.
$$
So we can write
$$
\frac{1}{\pi}\left(L^{2}-4 \pi F\right)=\int_{0}^{2 \pi}2\left(|\dot{z}|^{2}+\mathrm{Im}(z \dot{\bar{z}})\right) d t.
$$
We rewrite this integrand as
$$
2\left(|\dot{z}|^{2}+\mathrm{Im}(z \dot{\bar{z}})\right)=|z+i \dot{z}|^{2}+|\dot{z}|^{2}-|z|^{2}.
$$
The first term on the right has integral
\begin{equation*}
\int_{0}^{2\pi}|z+i \dot{z}|^2dt=\frac{2 \pi}{L} \int_{C}\left|z-\left(\frac{L}{2 \pi}\right) \boldsymbol{n}\right|^{2} d s.
\end{equation*}
Therefore
\begin{equation}\label{5}
\int_{0}^{2\pi}\left(|\dot z|^{2}-|z|^{2}\right)dt
=\frac{1}{\pi}\left(L^{2}-4 \pi F\right)-\frac{2 \pi}{L} \int_C\left|z-\left(\frac{L}{2 \pi}\right) \boldsymbol{n}\right|^{2}ds.
\end{equation}

Putting \eqref{4}, \eqref{5} into \eqref{gen ineq}, we have
\begin{equation*}
\begin{split}
0\le& \sum_{l=1}^{m-1} s_{m, l} \int_{0}^{2 \pi}\left|z^{(l+1)}+z^{(l-1)}\right|^{2} d t -\frac{(-1)^{m}}{2}(m-1) !(m+1) ! \int_{0}^{2 \pi}\left(|\dot{z}|^{2}-|z|^{2}\right) d t\\
=& \sum_{l=1}^{m-1} s_{m, l} \left(\frac{L}{2 \pi}\right)^{2l-3} \int_{C}\left|\left(\frac{d}{d s } \right)^{l-1}\left(z+\left(\frac{L}{2\pi}\right)^2\boldsymbol{\kappa}\right)\right|^{2} d s \\
& -\frac{(-1)^{m}}{2}(m-1) !(m+1) !
\left(\frac{1}{\pi}\left(L^{2}-4 \pi F\right)-\frac{2 \pi}{L} \int_{C}\left|z-\left(\frac{L}{2 \pi}\right) \boldsymbol{n}\right|^{2} d s\right).
\end{split}
\end{equation*}
Suppose that the equality holds. By Proposition \ref{prop higher}, we have
$$
\dot z(t)=\sum_{0<|n| \le m} a_{n} e^{i n t}
$$
for some constants $a_n\in \mathbb C$. Also, , $|\dot z(t)|^{2}=\mu$ is a constant. Then
$$
\mu=\dot z(t) \overline{\dot z(t)}=\sum_{n=-2m}^{2m}\left(\sum_{p+q=n} a_{p} \overline{a_{-q}}\right) e^{i n t}
$$
The uniqueness of Fourier series shows that, for all $0<|n| \le 2m$, the coefficients vanish:
$$
\sum_{p-q=n, 1 \le |p|, |q| \le 2} a_{p} \overline{a_{q}}=0
$$
By Lemma \ref {lem nonzero}, at most one of the coefficients $a_n$ is nonzero, i.e. $\dot z(t)=a_n e^{int}$ for some $0<|n| \le m$.
As $z(t)$ has centroid $0$, we have $z(t)= \frac{a_n}{in}e^{int}$. Since $z(t)$ is assumed to be simple and is positively oriented, we deduce that $z(t)=b e^{it}$ for some $0\ne b\in \mathbb C$.
\end{proof}

\section{Higher order Chernoff inequality}\label{chernoff}
We now prove a generalization of the Chernoff inequality \cite{chernoff1969area}, which states that
for a closed convex curve $\gamma$ on $\mathbb R^{2}$ with area $F$ and width function $w(\theta)$, we have
$$
F \le \frac{1}{2} \int_{0}^{\pi / 2} w(\theta) w\left(\theta+\frac{1}{2} \pi\right) d \theta.
$$
The equality holds if and only if $\gamma$ is a circle.
Ou and Pan \cite{ou2010some} proved the following generalized version of the Chernoff inequality:
$$
F \le \frac{1}{k} \int_{0}^{\pi / k} w_{k}(\theta) w_{k}\left(\theta+\frac{1}{k} \pi\right) d \theta
$$
where $w_{k}(\theta)=h(\theta)+h\left(\theta+\frac{2 \pi}{k}\right)+\cdots+h\left(\theta+\left(\frac{2(k-1) \pi}{k}\right)\right)$ and $h$ is the support function.

The definition of $w_k$ suggests we consider the following operator: let $T_k$ be the transform defined on the space of real-valued functions on $\mathbb S^{1}$, given by
\begin{align*}
T_k[h](\theta):=\frac{1}{k}\sum_{m=1}^{k} h\left(\theta+\frac{(2m-1)\pi}{k} \right).
\end{align*}
If $h$ is the support function of a convex curve, then the convex curve with support $T_k[h]$ is one whose support is the average of $h$ in $k$ directions.
Now, let $\gamma$ be a simple closed convex curve parametrised by the normal angle $\theta$, and $h=h(\theta)$ be the support function of $\gamma$.
Let $\displaystyle h=\sum_{n=-\infty}^{\infty}a_ne^{in\theta}$ be the Fourier series of $\displaystyle h$. Then the Fourier series of $\displaystyle T_k[h]$ becomes
\begin{equation*}
\begin{split}
T_k[h](\theta)=\sum_{n=-\infty}^{\infty} a_{n} \frac{1}{k}\sum_{m=1}^{k}e^{ i n\left(\theta+\frac{(2 m-1)\pi}{k} \right)}
=& \sum_{n=-\infty}^{\infty} a_{n} \frac{1}{k}\sum_{m=1}^{k}e^{ \frac{in(2 m-1)\pi}{k} }e^{in\theta}\\
=& \sum_{n=-\infty}^{\infty} a_{n} \frac{1}{k} \sum_{m=1}^{k} \cos \frac{n(2 m-1) \pi}{k}e^{in \theta}\\
=& \sum_{n=-\infty}^{\infty} a_{n} \beta_n e^{in \theta},
\end{split}
\end{equation*}
where $\displaystyle \beta_{n}=\frac{1}{k} \sum_{m=1}^{k} \cos \frac{n(2 m-1) \pi}{k}$. It is easy to see that $\displaystyle \beta_{n} \in\{ -1, 0, 1\} $, $\beta_0=1$ and $\beta_{1}=0$ if $k\ge 2$. Indeed, $\beta_n\ne 0$ if and only if $n$ is a multiple of $k$.

\begin{equation*}
\begin{split}
\end{split}
\end{equation*}
\begin{align*}
\end{align*}
On the other hand, consider the operator
$A[h]=h+\ddot h$.
The Fourier series of $A[h]$ is given by
\begin{align*}
A[h](\theta)=\sum_{n=-\infty}^{\infty} a_{n}(1-n^2) e^{i n \theta}
=\sum_{n=-\infty}^{\infty} a_{n} \delta_{n} e^{i n \theta}.
\end{align*}
Obviously, $\beta_n\ge \delta_n$ for all $n$.
As shown in Section \ref{sec idea}, this shows that for $k\ge 2$ and $m\in \mathbb N$, we have
\begin{align}\label{ineq gen chernoff}
\int_{0}^{2\pi} h(\theta)\cdot(T_k-A)^m[h](\theta)d\theta\ge0.
\end{align}
Since $\beta_n>\delta_n$ for $|n|\ge 2$, it follows that the equality case holds if and only if $h=\sum_{n=-1}^{1}a_ne^{in\theta}$, with $a_{-n}=\overline{a_{n}}$. This together with the formula
$\gamma(\theta)=h(\theta) \boldsymbol n +\dot{h}(\theta) \boldsymbol t
=h(\theta) e^{i\theta} +\dot{h}(\theta) i e^{i\theta}=2\overline a_1+a_0e^{i\theta}$, shows that $\gamma$ is a circle.

It remains to give a geometric interpretation to the inequality \eqref{ineq gen chernoff}.
Recall the definition $w_{k}(\theta)=h(\theta)+h\left(\theta+\frac{2 \pi}{k}\right)+\cdots+h\left(\theta+\left(\frac{2(k-1) \pi}{k}\right)\right)$.
When $m=1$, we have (cf. \cite[Eqn. 3-2]{ou2010some})
\begin{equation}\label{ou}
\begin{split}
\frac{1}{k} \int_{0}^{\frac{\pi}{k}} w_{k}(\theta) w_{k}\left(\theta+\frac{\pi}{k}\right) d \theta
=& \frac{1}{2 k} \sum_{m=1}^{k} \int_{0}^{2 \pi} h(\theta) h\left(\theta+\frac{(2 m-1) \pi}{k}\right) d \theta\\
=& \frac{1}{2} \int_{0}^{2 \pi} h(\theta) T_{k}[h](\theta) d \theta.
\end{split}
\end{equation}

On the other hand, the area $F$ enclosed by $\gamma$ is given by (\cite[Eqn 2.4.27]{Groemer1996})
\begin{align*}
F=\frac{1}{2} \int_{0}^{2 \pi}\left(h(\theta)^2-\dot h(\theta)^2\right) d \theta
=\frac{1}{2} \int_{0}^{2 \pi}h(\theta)\cdot A[h](\theta)d\theta.
\end{align*}
Therefore when $m=1$, we recover the Ou-Pan generalized Chernoff inequality
$$
F \le \frac{1}{k} \int_{0}^{\frac{\pi}{k}} w_{k}(\theta) w_{k}\left(\theta+\frac{\pi}{k} \right) d \theta.
$$
Now we assume $m\ge2$.
We can parameterise $\gamma$ by the normal angle $\theta$. In fact, it can be explicitly parametrised by \cite[p. 34]{ChouZhu2001}
\begin{align*}
\gamma(\theta)=h(\theta)\boldsymbol n +\dot h(\theta)\boldsymbol t,
\end{align*}
where $\boldsymbol n=(\cos \theta, \sin \theta)$ and $\boldsymbol t=(-\sin \theta, \cos \theta)$.
The locus of curvature centers $\gamma_{(1)}$ of $\gamma$ is then defined to be $\gamma(\theta)-\rho (\theta) \boldsymbol n $, where $\rho$ is the radius of curvature. Since $\rho=h+\ddot h$,
\begin{align*}
\gamma_{(1)}(\theta)=\dot{h} \boldsymbol t-\ddot{h} \boldsymbol n.
\end{align*}
Of course, this curve can fail to be simple and convex, but we can still regard it as a smooth parametrised curve and compute its algebraic area using the formula $\mathrm{Area}=\frac{1}{2}\int xdy-ydx$.
Differentiating $\gamma_{(1)}$ gives $\dot \gamma_{(1)}=-(\dot{h}+\dddot{h}) \boldsymbol n$, and hence we can fix $\boldsymbol t$ as a unit normal field of $\gamma_{(1)}$, and w.r.t. this choice of normal, the support function of $\gamma_{(1)}$ is then $\dot h$.

Inductively, we can define the locus of curvature centers $\gamma_{(2)}$ of $\gamma_{(1)}$, and so on, and deduce that $\gamma_{(j)}$ has support function $h^{(j)}(\theta)$ w.r.t. the normal $ J^j \boldsymbol n $, where $J$ is the anti-clockwise rotation by $\frac{\pi}{2}$. Therefore the algebraic area $F$ of $\gamma_{(j)}$ is given by
\begin{align}\label{alg area}
F[\gamma_{(j)}]=\frac{1}{2}\int_{0}^{2\pi} ({h^{(j)}(\theta)}^2-{h^{(j+1)}(\theta)}^2)d\theta.
\end{align}

The R.H.S. of \eqref{ineq gen chernoff} can be written as
\begin{equation}\label{R.H.S. }
\begin{split}
& \sum_{j=0}^{m}(-1)^{m-j} \binom{m }{j}\int_{0}^{2\pi}h \cdot A^{m-j} \left(T_{k}\right)^{j}[h]d\theta\\
=& (-1)^{m}\int_{0}^{2\pi}h \cdot A^{m} [h]d\theta+\int_{0}^{2 \pi} h \cdot\left(T_{k}\right)^{m}[h] d \theta+
\sum_{j=1}^{m-1}(-1)^{m-j} \binom{m }{j}\int_{0}^{2\pi}h \cdot A^{m-j} \left(T_{k}\right)^{j}[h]d\theta.
\end{split}
\end{equation}

Using \eqref{alg area}, it is not hard to show by induction that
\begin{equation}\label{higher alg area}
\begin{split}
\int_{0}^{2\pi}h A^{m}[h]
=& \sum_{r=0}^{m-1}(-1)^r\binom{m-1}{r} \int_{0}^{2 \pi}\left(h^{(r)}(\theta)^{2}-h^{(r+1)}(\theta)^{2}\right) d \theta\\
=& 2\sum_{r=0}^{m-1}(-1)^{r}\binom{m-1}{r} F\left[\gamma_{(r)}\right].
\end{split}
\end{equation}
It is also easy to see that $\left(T_k\right)^{l+2}=\left(T_k\right)^{l}$ if $l\ge 1$. So for $j\ge 1$, the term $\left(T_{k}\right)^{j}[h]$ only depends on $j\pmod 2$. By abuse of notation, we denote by $T_k\gamma$ the convex curve whose support function is $T_k[h]$ and by $T_k^2\gamma$ the convex curve whose support function is $(T_k)^2[h]$.

We now consider the term $\int_{0}^{2 \pi} h \cdot\left(T_{k}\right)^{m}[h] d \theta$ in \eqref{R.H.S. }. If $m$ is odd, then \eqref{ou} gives
\begin{align*}
\int_{0}^{2 \pi} h(\theta)(T_{k})^{m}[h](\theta) d \theta
=\int_{0}^{2 \pi} h(\theta) T_{k} [h](\theta) d \theta
=\frac{2}{k} \int_{0}^{\frac{\pi}{k}} w_{k}(\theta) w_{k}\left(\theta+\frac{\pi}{k}\right) d \theta.
\end{align*}
When $m$ is even,
\begin{align*}
\int_{0}^{2 \pi} h(\theta)(T_{k})^{m}[h](\theta) d \theta
=& \int_{0}^{2 \pi} h(\theta)(T_{k})^2[h](\theta) d \theta.
\end{align*}
Let us consider the Hermitian form defined on $C(\mathbb S^1, \mathbb C)$ defined by
\begin{align*}
I_1(\phi, \psi)=\int_{0}^{2\pi} \overline \psi \cdot (T_k)^2[\psi]d\theta
=\int_{0}^{2\pi} \overline \psi \cdot\frac{1}{k} \sum_{j=0}^{k-1} \psi\left(\theta+\frac{2 j \pi}{k}\right) d\theta.
\end{align*}
We claim that $\{e^{in\theta}\}$ forms an orthogonal (but not necessarily orthonormal) Schauder basis for $I_1$. To see this, note that $I_1(e^{in\theta}, e^{im\theta})$ is the integral of
\begin{align*}
e^{-im\theta}\cdot \frac{1}{k} \sum_{j=0}^{k-1} e^{in\left(\theta+\frac{2 j \pi}{k}\right)}
=e^{i(n-m)\theta}\cdot \frac{1}{k} \sum_{j=0}^{k-1} e^{ \frac{2i n j \pi}{k} }.
\end{align*}
If $\displaystyle k\nmid n$, then $\displaystyle \sum_{j=0}^{k-1} e^{\frac{2 i n j \pi}{k}}=0$. Otherwise, $\displaystyle k\mid n$ and the integrand is just $\displaystyle e^{i(n-m)\theta}$, which has integral $0$ over $[0, 2\pi]$ when $m\ne n$. So for $\displaystyle \phi=\sum_n a_n e^{in\theta}$, $\displaystyle I_1(\phi, \phi)=2\pi\sum_{n: k|n} |a_n|^2$.

Similar calculation shows that the Hermitian form $\displaystyle I_2(\phi, \psi)=\int_{0}^{\frac{2\pi}{k}} \overline {\psi} \cdot (T_{k})^2 [\phi] d \theta$ satisfies the property that for $ \displaystyle \phi=\sum_n a_n e^{in\theta}$, $ \displaystyle I_2(\phi, \phi)=\frac{2\pi}{k} \sum_{n: k \mid n} |a_n|^2=\frac{1}{k}I_1(\phi, \phi)$.

Therefore, when $m$ is even,
$$
\int_{0}^{2 \pi} h(\theta)\left(T_{k}\right)^{m}[h](\theta) d \theta=\int_{0}^{2 \pi} h(\theta)\left(T_{k}\right)^{2}[h](\theta) d \theta
=k \int_{0}^{\frac{2\pi}{k}} \left(T_k[h]\right)^2d\theta=\frac{1}{k}\int_{0}^{\frac{2\pi}{k}} w_k(\theta)^2d \theta.
$$
Now, consider the last term in \eqref{R.H.S. }
\begin{equation*}
\begin{split}
& \sum_{j=1}^{m-1}(-1)^{m-j}\binom{m}{j} \int_{0}^{2 \pi} h \cdot A^{m-j}\left(T_{k}\right)^{j}[h]\\
=& (-1)^{m} \sum_{1\le j \le m -1\atop 2 \mid j}\binom{m}{j} \int_{0}^{2 \pi} h \cdot A^{m-j} T_{k}^{2}[h]
-(-1)^{m} \sum_{1\le j \le m -1\atop 2 \nmid j}\binom{m}{j} \int_{0}^{2 \pi} h \cdot A^{m-j} T_{k}[h] \\
\end{split}
\end{equation*}
The term $\displaystyle \int_{0}^{2 \pi} h \cdot A^{m-j} T_{k}[h]d\theta$ has a similar geometric meaning as \eqref{higher alg area}.
In fact, same computation gives
\begin{equation*}
\begin{split}
\int_{0}^{2 \pi} h \cdot A^{m-j} T_{k}[h] d \theta
=& \sum_{r=0}^{m-1}(-1)^r\binom{m-1-j}{r} \int_{0}^{2 n}\left(h^{(r)}\left(T_{k}[h]\right)^{(r)}-h^{(r+1)}\left(T_{k}[h]\right)^{(r+1)}\right) d \theta\\
=& 2\sum_{r=0}^{m-1}(-1)^{r}\binom{m-1-j}{r}F[\gamma_{(r)}, (T_k\gamma)_{(r)}].
\end{split}
\end{equation*}
Here $F[\cdot, \cdot]$ is the algebraic mixed area (cf. \cite[Eqn 2.4.28]{Groemer1996}).

Combining the above, we have the following conclusion. When $m$ is odd,
\begin{equation*}
\begin{split}
0 \le& - \sum_{r=0}^{m-1}(-1)^{r}\binom{m-1}{r} F[\gamma_{(r)}] +\frac{1}{k} \int_{0}^{\frac{\pi}{k}} w_{k}(\theta) w_{k}\left(\theta+\frac{\pi}{k}\right) d \theta\\
& - \sum_{1\le j \le m-1 \atop 2 \mid j}\binom{m}{j} \sum_{r=0}^{m-1}(-1)^{r}\binom{m-1-j}{r} F\left[\gamma_{(r)}, \left(T_{k}^{2} \gamma\right)_{(r)}\right] \\
& + \sum_{1\le j \le m -1\atop 2 \nmid j}\binom{m}{j} \sum_{r=0}^{m-1}(-1)^{r}\binom{m-1-j}{r} F\left[\gamma_{(r)}, \left(T_{k} \gamma\right)_{(r)}\right].
\end{split}
\end{equation*}
When $m$ is even,
\begin{equation*}
\begin{split}
0 \le& \sum_{r=0}^{m-1}\binom{m-1}{r}(-1)^{r} F[\gamma_{(r)}] +\frac{1}{2k} \int_{0}^{\frac{2 \pi}{k}} w_{k}(\theta)^{2} d \theta\\
& + \sum_{1\le j \le m-1 \atop 2 \mid j}\binom{m}{j} \sum_{r=0}^{m-1}(-1)^{r}\binom{m-1-j}{r} F\left[\gamma_{(r)}, \left(T_{k}^{2} \gamma\right)_{(r)}\right] \\
& - \sum_{1\le j \le m -1\atop 2 \nmid j}\binom{m}{j} \sum_{r=0}^{m-1}(-1)^{r}\binom{m-1-j}{r} F\left[\gamma_{(r)}, \left(T_{k} \gamma\right)_{(r)}\right].
\end{split}
\end{equation*}

So we have proved the following result.
\begin{theorem}\label{Chernoff}
Let $2\le k\in \mathbb N$, $m\in \mathbb N$ and $\gamma$ be a smooth closed convex curve on $\mathbb R^2$.
When $m$ is odd,
\begin{equation*}
\begin{split}
0 \le& - \sum_{r=0}^{m-1}(-1)^{r}\binom{m-1}{r} F[\gamma_{(r)}] +\frac{1}{k} \int_{0}^{\frac{\pi}{k}} w_{k}(\theta) w_{k}\left(\theta+\frac{\pi}{k}\right) d \theta\\
& - \sum_{1\le j \le m-1 \atop 2 \mid j}\binom{m}{j} \sum_{r=0}^{m-1}(-1)^{r}\binom{m-1-j}{r} F\left[\gamma_{(r)}, \left(T_{k}^{2} \gamma\right)_{(r)}\right] \\
& + \sum_{1\le j \le m -1\atop 2 \nmid j}\binom{m}{j} \sum_{r=0}^{m-1}(-1)^{r}\binom{m-1-j}{r} F\left[\gamma_{(r)}, \left(T_{k} \gamma\right)_{(r)}\right].
\end{split}
\end{equation*}
When $m$ is even,
\begin{equation*}
\begin{split}
0 \le& \sum_{r=0}^{m-1}\binom{m-1}{r}(-1)^{r} F[\gamma_{(r)}] +\frac{1}{2k} \int_{0}^{\frac{2 \pi}{k}} w_{k}(\theta)^{2} d \theta\\
& + \sum_{1\le j \le m-1 \atop 2 \mid j}\binom{m}{j} \sum_{r=0}^{m-1}(-1)^{r}\binom{m-1-j}{r} F\left[\gamma_{(r)}, \left(T_{k}^{2} \gamma\right)_{(r)}\right] \\
& - \sum_{1\le j \le m -1\atop 2 \nmid j}\binom{m}{j} \sum_{r=0}^{m-1}(-1)^{r}\binom{m-1-j}{r} F\left[\gamma_{(r)}, \left(T_{k} \gamma\right)_{(r)}\right].
\end{split}
\end{equation*}
Here $w_{k}(\theta)=\sum_{j=0}^{k-1}h\left(\theta+\frac{2j\pi}{k} \right)$ is the generalized width, $h$ is the support function of $\gamma$, $F$ is the algebraic area or the algebraic mixed area, $(T_k)^j\gamma$ is the curve whose support function is $(T_k)^j[h]$, and $\beta_{(j)}$ is the $j$-th order locus of curvature centers of a curve $\beta$ as defined above.

The equality holds if and only if $\gamma$ is a circle.
\end{theorem}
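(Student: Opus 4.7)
The plan is to realise the theorem as the Fourier-analytic inequality
$$\int_0^{2\pi} h \cdot (T_k - A)^m[h] \, d\theta \geq 0$$
and then translate each term produced by expanding $(T_k - A)^m$ into a geometric quantity. The starting point is that $T_k$ and $A[h] := h + \ddot h$ are both self-adjoint on $L^2(\mathbb S^1)$ and simultaneously diagonalised by $\{e^{in\theta}\}$ with eigenvalues $\beta_n \in \{-1, 0, 1\}$ and $\delta_n = 1 - n^2$ respectively. Since $\beta_n \geq \delta_n$ for all $n$ (equality when $|n| \leq 1$, strict inequality when $|n| \geq 2$), the operator $T_k - A$ is nonnegative, and so is its $m$-th power; the above inequality is exactly the recipe of Section \ref{sec idea} applied to the commuting family $\{T_k - A\}_{j=1}^m$ (all equal).

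Next I would expand by the binomial theorem
$$\int_0^{2\pi} h \cdot (T_k - A)^m[h] \, d\theta = \sum_{j=0}^m (-1)^{m-j}\binom{m}{j} \int_0^{2\pi} h \cdot A^{m-j} T_k^j[h] \, d\theta$$
and handle three types of contributions separately. The pure curvature term ($j = 0$) is dealt with by telescoping $A^m$ via $A = \mathrm{Id} + \partial_\theta^2$, producing $\int (h^{(r)})^2 - (h^{(r+1)})^2 \, d\theta = 2F[\gamma_{(r)}]$ through the support-function description of the $r$-th locus of curvature centers $\gamma_{(r)}$ recorded in \eqref{alg area} and \eqref{higher alg area}. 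The pure width term ($j = m$) splits by parity: for odd $m$, $T_k^m = T_k$ and Ou--Pan's computation \eqref{ou} gives $\frac{2}{k}\int_0^{\pi/k} w_k(\theta) w_k(\theta + \pi/k) \, d\theta$; for even $m$, $T_k^m = T_k^2$ and a short Parseval computation using the two Hermitian forms $I_1, I_2$ already introduced above yields $\frac{1}{k}\int_0^{2\pi/k} w_k(\theta)^2 \, d\theta$. The mixed terms ($1 \leq j \leq m - 1$) are then treated by the same $A^{m-j}$-telescoping, now applied to the bilinear pairing of $h$ with $T_k^j[h] = T_k[h]$ or $T_k^2[h]$ (by parity of $j$), producing the algebraic mixed areas $F[\gamma_{(r)}, (T_k \gamma)_{(r)}]$ or $F[\gamma_{(r)}, (T_k^2 \gamma)_{(r)}]$.

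The main obstacle I expect is the sign and index bookkeeping that results from combining the binomial factor $(-1)^{m-j}$, the $(-1)^r$ produced by the telescoping of $A^{m-j}$, and the parity-dependent choice between $T_k$ and $T_k^2$; carrying this through is exactly what forces the theorem to be stated in two cases according to the parity of $m$. For the equality statement, $\int h(T_k - A)^m[h] \, d\theta = 0$ forces $a_n = 0$ whenever $\beta_n > \delta_n$, i.e.\ for every $|n| \geq 2$; combined with the reality condition $a_{-n} = \overline{a_n}$ and the parametrisation $\gamma(\theta) = h(\theta)\boldsymbol{n} + \dot h(\theta)\boldsymbol{t} = h(\theta) e^{i\theta} + i\dot h(\theta) e^{i\theta}$, this collapses $\gamma$ to $2\overline{a_1} + a_0 e^{i\theta}$, which is a circle.
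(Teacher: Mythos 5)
Your proposal is correct and follows essentially the same route as the paper: the nonnegativity of $\int_0^{2\pi} h\cdot (T_k-A)^m[h]\,d\theta$ from the multiplier comparison $\beta_n\ge\delta_n$, the binomial expansion into pure-curvature, pure-width and mixed terms, the telescoping of $A^{m-j}$ into algebraic (mixed) areas of the loci of curvature centers, the parity reduction $T_k^{j+2}=T_k^j$, and the identical equality analysis. No substantive differences to report.
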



\begin{thebibliography}{10}

\bibitem{alzer1991converses}
H. Alzer.
\newblock Converses of two inequalities of Ky Fan, O. Taussky, and J. Todd.
\newblock {\em Journal of Mathematical Analysis and Applications},
161(1):142--147, 1991.

\bibitem{block1957discrete}
H. D. ~Block.
\newblock Discrete isoperimetric-type inequalities.
\newblock {\em Proceedings of the American Mathematical Society},
8(5):860--862, 1957.

\bibitem{Chakerian1978} G. D. Chakerian, \emph{The isoperimetric theorem for curves on minimal surfaces}, Proc. Amer. Math. Soc. 69 (1978), no. 7, 312--313.

\bibitem{chernoff1969area}
P. ~R Chernoff.
\newblock An area-width inequality for convex curves.
\newblock {\em The American Mathematical Monthly}, 76(1):34--35, 1969.

\bibitem{ChouZhu2001} K.-S. Chou and X.-P. Zhu, \emph{The curve shortening problem}, CRC Press, 2001.

\bibitem{fisher1985perpendicular}
J. Fisher, D. Ruoff, and J. ~Shilleto.
\newblock Perpendicular polygons.
\newblock {\em The American Mathematical Monthly}, 92(1):23--37, 1985.

\bibitem{Groemer1996} H. Groemer, \emph{Geometric applications of Fourier series and spherical harmonics}, Cambridge University Press, 1996.

\bibitem{indrei2016sharp}
E. Indrei.
\newblock A sharp lower bound on the polygonal isoperimetric deficit.
\newblock {\em Proceedings of the American Mathematical Society},
144(7):3115--3122, 2016.

\bibitem{indrei2015stability}
E. Indrei. and L. Nurbekyan.
\newblock On the stability of the polygonal isoperimetric inequality.
\newblock {\em Advances in Mathematics}, 276:62--86, 2015.

\bibitem{kwong2021higher2}
K-K. Kwong.
\newblock Higher order poincare inequalities and Minkowski-type inequalities.
\newblock {\em arXiv:2103.10627}, 2021.

\bibitem{kwong2021higher}
K-K. Kwong and H. Lee.
\newblock Higher order Wirtinger-type inequalities and sharp bounds for the
isoperimetric deficit.
\newblock {\em Proceedings of the American Mathematical Society},
149(11):4825--4840, 2021.

\bibitem{lunter1994new}
G. Lunter.
\newblock New proofs and a generalization of inequalities of Fan, Taussky, and
Todd.
\newblock {\em Journal of Mathematical Analysis and Applications},
185(2):464--476, 1994.

\bibitem{mccoy2023representation}
J. A. McCoy, P. Schrader, and G. Wheeler,
\newblock Representation formulae for higher order curvature flows.
\textit{Journal of Differential Equations}, vol. 344, pp. 1--43, 2023.

\bibitem{milovanovic1997discrete}
G. ~V. Milovanovi{\'c} and I. Milovanovi{\'c}.
\newblock Discrete inequalities of Wirtinger's type for higher differences.
\newblock {\em Journal of Inequalities and Applications}, 1997(4):3

\bibitem{osserman1979bonnesen}
R. Osserman.
\newblock Bonnesen-style isoperimetric inequalities.
\newblock {\em The American Mathematical Monthly}, 86(1):1--29, 1979.

\bibitem{Osserman1978}
R. Osserman, \emph{The isoperimetric inequality}, Bull. Amer. Math. Soc. 84 (1978), no. 6, 1182--1238.

\bibitem{ou2010some}
K. ~Ou and S. Pan.
\newblock Some remarks about closed convex curves.
\newblock {\em Pacific Journal of Mathematics}, 248(2):393--401, 2010.

\bibitem{schoenberg1950finite}
I. J. ~Schoenberg.
\newblock The finite Fourier series and elementary geometry.
\newblock {\em The American Mathematical Monthly}, 57(6):390--404, 1950.

\bibitem{tang1991discrete}
D. Tang.
\newblock Discrete wirtinger and isoperimetric type inequalities.
\newblock {\em Bulletin of the Australian Mathematical Society},
43(3):467--474, 1991.

\bibitem{zhang1997bonnesen}
X-M. Zhang.
\newblock Bonnesen-style inequalities and pseudo-perimeters for polygons.
\newblock {\em Journal of Geometry}, 60(1):188--201, 1997.

\bibitem{zhou2011some}
J. Zhou, Y. Xia, and C. Zeng.
\newblock Some new Bonnesen-style inequalities.
\newblock {\em Journal of the Korean Mathematical Society}, 48(2):421--430,
2011.
\end{thebibliography}
\end{document}